\newtheorem{theorem}{Theorem}[section]
\newtheorem{definition}[theorem]{Definition}
\newtheorem{proposition}[theorem]{Proposition}
\newtheorem{lemma}[theorem]{Lemma}
\newtheorem{remark}[theorem]{Remark}
\DeclareMathOperator{\NS}{NS}
\DeclareMathOperator{\Pic}{Pic}
\DeclareMathOperator{\Div}{Div}
\DeclareMathOperator{\sgn}{sgn}
\author{Paul Reschke and Bar Roytman}
\date{}
\title{Lower Semi-Continuity of Entropy in a Family of K3 Surface Automorphisms}
\thanks{The first author was partially supported by NSF grants DMS-0943832 and DMS-1045119. The second author was partially supported by NSF grant DMS-1266207.}
\begin{document}

\begin{abstract}
We compute topological entropies for a large family of automorphisms of K3 surfaces in \(\mathbb{P}^1 \times \mathbb{P}^1 \times \mathbb{P}^1\). In keeping with a result by Xie \cite{Xie}, we find that the entropies vary in a lower semi-continuous manner as the Picard ranks of the K3 surfaces vary.
\end{abstract}

\maketitle

\section{Introduction}

We compute entropies in a family of automorphisms of complex K3 surfaces in
\[\mathbb{P}^1 \times \mathbb{P}^1 \times \mathbb{P}^1 = \{(x=[x_0:x_1],y=[y_0:y_1],z=[z_0:z_1])\}.\]
The set of all effective divisors on \(\mathbb{P}^1 \times \mathbb{P}^1 \times \mathbb{P}^1\) of tri-degree \((2,2,2)\) is parametrized by \(\mathbb{P}^{26}\), and every non-singular prime divisor in this set is a K3 surface; so a general effective divisor of tri-degree \((2,2,2)\) is a K3 surface. Throughout this paper, \(Q=Q(x_0,x_1,y_0,y_1,z_0,z_1)\) is a tri-homogeneous polynomial of tri-degree \((2,2,2)\) and \(S\) is a K3 surface in \(\mathbb{P}^1 \times \mathbb{P}^1 \times \mathbb{P}^1\) of the form \(\{Q=0\}\).

We write
\[Q(x_0,x_1,y_0,y_1,z_0,z_1) = \sum_{j \in \{0,1,2\}} x_0^j x_1^{2-j}Q_{x,j}(y_0,y_1,z_0,z_1)\]
(so each non-trivial \(Q_{x,j}=Q_{x,j}(y_0,y_1,z_0,z_1)\) is bi-homogeneous of bi-degree \((2,2)\)), and for irreducible \(Q\) we define a birational involution \(\tau_x\) on \(\mathbb{P}^1 \times \mathbb{P}^1 \times \mathbb{P}^1\) by
\[\tau_x(x,y,z) = ([x_0Q_{x,2}+x_1Q_{x,1}: -x_1Q_{x,2}],y,z).\]
For \((x,y,z) \in S\) in the domain of \(\tau_x\),
\[\tau_x(x,y,z) = ([x_1Q_{x,0}:x_0Q_{x,2}],y,z) \in S;\]
since \(S\) is its own unique minimal model, it follows that \(\tau_x\) defines an automorphism of \(S\). We define \(\tau_y\) and \(\tau_z\) similarly; so \(\mathrm{Aut}(S)\) contains the subgroup generated by \(\{\tau_x,\tau_y,\tau_z\}\).

Silverman and Mazur \cite{Maz} first suggested compositions of the involutions just described as interesting examples of infinite-order automorphisms of K3 surfaces. Wang \cite{Wan} and Baragar \cite{Ba1} used automorphisms in this subgroup to study rational points on \(S\) (when \(S\) is defined over a number field). Cantat \cite{Can} and McMullen \cite{McM} highlighted \(f:=\tau_z \circ \tau_y \circ \tau_x\) on various choices of \(S\) as examples of K3 surface automorphisms with positive topological entropy. Cantat observed that results by Gromov \cite{Gro}, Yomdin \cite{Yom}, and Friedland \cite{Fri} imply that the entropy of \(f\) is the logarithm of the spectral radius \(\lambda(f)\) of \(f^*:\Pic(S) \rightarrow \Pic(S)\). Wang, Cantat, and McMullen showed how to compute \(f^*\) in the very general case where \(S\) has Picard rank \(\rho(S)=3\). Baragar \cite{Ba2} showed how to compute \(f^*\) in a special family where \(\rho(S)=4\), and thereby showed that \(\lambda(f)\) is not constant among all \(K3\) surfaces in \(\mathbb{P}^1 \times \mathbb{P}^1 \times \mathbb{P}^1\). Here, we compute \(f^*\) for a much larger set of choices of \(S\), with \(\rho(S)\) ranging from 3 to 11.

For all \(p \in \mathbb{P}^1\), we let \(E_{x=p}\) (resp., \(E_{y=p}\), \(E_{z=p}\)) denote the restriction to \(S\) of the prime divisor \(\{x=p\}\) (resp., \(\{y=p\}\), \(\{z=p\}\)) on \(\mathbb{P}^1 \times \mathbb{P}^1 \times \mathbb{P}^1\); we call each \(E_{x=p}\) (resp., \(E_{y=p}\), \(E_{z=p}\)) a fiber of \(S\) over the \(x\)-axis (resp., \(y\)-axis, \(z\)-axis). Each fiber is an effective divisor of bi-degree \((2,2)\) in \(\mathbb{P}^1 \times \mathbb{P}^1\), and hence is an elliptic curve if it is a non-singular prime divisor; so a general fiber is an elliptic curve.

For all \(p=(p_1,p_2) \in \mathbb{P}^1 \times \mathbb{P}^1\), we define (in \(\mathbb{P}^1 \times \mathbb{P}^1 \times \mathbb{P}^1\)) \(C_{x,p} := \{y=p_1\} \cap \{z=p_2\}\), \(C_{y,p} := \{x=p_2\} \cap \{z=p_1\}\), and \(C_{z,p} := \{x=p_1\} \cap \{y=p_2\}\); we call each \(C_{x,p}\) (resp., \(C_{y,p}\), \(C_{z,p}\)) a curve parallel to the \(x\)-axis (resp., \(y\)-axis, \(z\)-axis). It may happen that \(S\) contains a curve parallel to an axis. If, for example, \(C_{x,p} \subseteq S\), then neither \(E_{y=p_1}\) nor \(E_{z=p_2}\) is a prime divisor.

For a divisor \(D\) on \(S\), we let \([D]\) denote the class of \(D\) in \(\Pic(S)\). We let \((\_ \cdot \_)\) denote the intersection form on both \(\Pic(S)\) and \(\Div(S)\). In light of the fact that the fibers of \(S\) over a fixed axis are all linearly equivalent, we let \(E_x\), \(E_y\) and \(E_z\) in \(\Pic(S)\) denote the classes of the fibers over, respectively, the \(x\)-, \(y\)-, and \(z\)-axes. We let \(\mathcal{B}_x(S)\), \(\mathcal{B}_y(S)\), and \(\mathcal{B}_z(S)\) denote the sets of all classes of curves parallel to, respectively,  the \(x\)-, \(y\)-, and \(z\)-axes which are contained in \(S\), and we set
\[\mathcal{B}(S) := \{E_x,E_y,E_z\} \cup \mathcal{B}_x(S) \cup \mathcal{B}_y(S) \cup \mathcal{B}_z(S).\]
Since \(K_S\) is trivial, the adjunction formula gives \((E_\omega \cdot E_\omega)=0\) for each \(E_\omega\) and \((C \cdot C) = -2\) for each curve \(C \subseteq S\) parallel to an axis; it follows that the number of distinct classes in \(\mathcal{B}(S)\) is 3 plus the number of distinct curves parallel to axes in \(S\).

\begin{definition}\label{PureType}
For an ordered triple \((k,l,m)\) of non-negative integers, we say that \(S\) is ``pure of type \((k,l,m)\)'' if the following conditions hold:
\begin{itemize}
\item[a)] \(|\mathcal{B}_x(S)|=k\), \(|\mathcal{B}_y(S)|=l\), and \(|\mathcal{B}_z(S)|=m\);
\item[b)] \(\mathcal{B}(S)\) is a basis for \(\Pic(S)\); and
\item[c)] \((\mathcal{L} \cdot \mathcal{L}') = 0\) whenever \(\mathcal{L}\) and \(\mathcal{L}'\) are distinct classes in \(\mathcal{B}_x(S) \cup \mathcal{B}_y(S) \cup \mathcal{B}_z(S)\).
\end{itemize}
\end{definition}

We let \(\mathcal{U}_{k,l,m} \subseteq \mathbb{P}^{26}\) denote the set of all K3 surfaces which are pure of type \((k,l,m)\). If \((k',l',m')\) is a reordering of \((k,l,m)\), then \(\mathcal{U}_{k',l',m'} \cong \mathcal{U}_{k,l,m}\). If \(S \in \mathcal{U}_{k,l,m}\), then the conditions in Definition \ref{PureType} provide sufficient information for the computation of \(f^*\). However, it is a significant step to show that pure K3 surfaces of various types even exist. For distinct ordered triples \((k,l,m)\) and \((k',l',m')\), we write \((k,l,m) < (k',l',m')\) if \(k \leq k'\), \(l \leq l'\), and \(m \leq m'\). We set
\[\mathcal{N}'' := \{(6,0,0),(5,1,1),(4,2,2),(3,3,3)\},\]
we let \(\mathcal{N}'\) denote the set of all permutations of ordered triples in \(\mathcal{N}''\), and we let \(\mathcal{N}\) denote the set of all ordered triples \((k,l,m)\) of non-negative integers satisfying \((k,l,m) \leq \nu\) for some \(\nu \in \mathcal{N}'\).

\begin{theorem}\label{Existence}
For \((k,l,m) \in \mathcal{N}-\{(3,3,3)\}\), the dimension of the space of isomorphism classes of K3 surfaces contained in \(\mathcal{U}_{k,l,m}\) is \(17-k-l-m\). If \((k',l',m') \in \mathcal{N}\) satisfies \((k,l,m) < (k',l',m')\), then \(\mathcal{U}_{k',l',m'}\) is contained in the closure of \(\mathcal{U}_{k,l,m}\).

For \((k,l,m) \notin \mathcal{N}\), \(\mathcal{U}_{k,l,m} = \emptyset\).
\end{theorem}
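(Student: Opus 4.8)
\emph{Outline of the argument.} The plan is to pass back and forth between the explicit model \(S=\{Q=0\}\) and the theory of lattice-polarized K3 surfaces. The first task is to identify \(\Pic(S)\) for \(S\) pure of type \((k,l,m)\). Writing \(E_x,E_y,E_z\) for the fiber classes and \(A_1,\dots,A_k\), \(B_1,\dots,B_l\), \(C_1,\dots,C_m\) for the classes in \(\mathcal B_x(S),\mathcal B_y(S),\mathcal B_z(S)\), one reads off the intersection numbers geometrically: adjunction and \(K_S=0\) give \(E_\omega^2=0\) and \(A_i^2=B_j^2=C_t^2=-2\); the three projections to \(\mathbb P^1\) give \(E_x\cdot E_y=E_x\cdot E_z=E_y\cdot E_z=2\); a curve parallel to the \(x\)-axis is a section of \(S\to\mathbb P^1_x\) and lies in a single fiber of each of \(S\to\mathbb P^1_y\) and \(S\to\mathbb P^1_z\), so \(A_i\cdot E_x=1\) and \(A_i\cdot E_y=A_i\cdot E_z=0\) (and cyclically); and the remaining cross terms vanish by condition (c). Thus \(\Pic(S)\) is the explicit even lattice \(\Lambda_{k,l,m}\) of rank \(3+k+l+m\) with this Gram matrix. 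Separating off the negative-definite sublattice spanned by the differences \(A_i-A_1\), \(B_j-B_1\), \(C_t-C_1\) and then performing elementary congruences in the variables \(\sum_i\alpha_i,\sum_j\beta_j,\sum_t\gamma_t\), one reduces the form to \(\langle -2/k\rangle\oplus\langle -2/l\rangle\oplus\langle -2/m\rangle\), a negative-definite part, and the ternary form \(N=\left(\begin{smallmatrix}k/2&2&2\\2&l/2&2\\2&2&m/2\end{smallmatrix}\right)\); since some principal \(2\times2\) minor of \(N\) is indefinite whenever \(kl<16\) (which holds on \(\mathcal N\)), it follows that \(\Lambda_{k,l,m}\) has signature \((1,2+k+l+m)\) precisely when \(\det N=\tfrac18(klm-16(k+l+m)+128)>0\), and this holds throughout \(\mathcal N\) (it suffices to check the four maximal triples, where \(\det N\) is least because it decreases in each of \(k,l,m\) on \(\mathcal N\)).

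For existence and the dimension count, expand \(Q\) as a \(3\times3\times3\) array \(a=(a_{ijk})\). A curve \(\{y=q,\,z=r\}\) parallel to the \(x\)-axis lies on \(S\) exactly when \((q,r)\) is a base point of the net \(\langle Q_{x,0},Q_{x,1},Q_{x,2}\rangle\) of bidegree-\((2,2)\) curves in \(\mathbb P^1_y\times\mathbb P^1_z\) --- equivalently, when \(a\) contracted against the Veronese images of \(q\) and \(r\) vanishes --- and cyclically for the other axes. Hence requiring \(S\) to contain \(k+l+m\) prescribed parallel curves in general position is \(3(k+l+m)\) linear conditions on \(a\in\mathbb P^{26}\); for \((k,l,m)\in\mathcal N\) these are independent (essentially the contrapositive of the non-existence statement below, confirmable by exhibiting one tensor realizing the configuration and invoking upper-semicontinuity of Picard rank), so the incidence variety of pairs (tensor, labeled configuration) has dimension \(26-3(k+l+m)+2(k+l+m)=26-(k+l+m)\) and dominates a subvariety of \(\mathbb P^{26}\) of that dimension with generically finite fibers; dividing by the \(9\)-dimensional automorphism group of \(\mathbb P^1\times\mathbb P^1\times\mathbb P^1\) gives \(17-k-l-m\). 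For a general member \(S\) of that subvariety one then checks: \(S\) is a smooth K3 (Bertini away from the base loci); general position forces prescribed parallel curves along distinct axes to be disjoint, so (c) holds; \(\rho(S)=3+k+l+m\) exactly and there are no further parallel curves (by upper-semicontinuity of \(\rho\) from an explicit example, together with the Diophantine fact that the only \((-2)\)-class in \(\Lambda_{k,l,m}\) meeting \(E_x,E_y,E_z\) in \(1,0,0\) is one of the \(A_i\), and cyclically); and --- taking the polarizing embedding \(\Lambda_{k,l,m}\hookrightarrow\Lambda_{\mathrm{K3}}\) primitive --- \(\mathcal B(S)\) is an honest \(\mathbb Z\)-basis of \(\Pic(S)\), giving (b). The inclusion \(\mathcal U_{k',l',m'}\subseteq\overline{\mathcal U_{k,l,m}}\) for \((k,l,m)<(k',l',m')\) in \(\mathcal N\) then follows by deformation: a general \((17-k-l-m)\)-parameter deformation of a tensor realizing \(\mathcal U_{k',l',m'}\) that retains the first \(k+l+m\) prescribed curves destroys the remaining base points and lands in \(\mathcal U_{k,l,m}\). (One can alternatively run this whole paragraph through surjectivity of the period map and the Torelli theorem, using the primitive inclusions \(\Lambda_{k,l,m}\hookrightarrow\Lambda_{k',l',m'}\hookrightarrow\Lambda_{\mathrm{K3}}\), which exist in the relevant ranks by Nikulin's criterion.)

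For non-existence, let \(S\) be pure of type \((k,l,m)\); I will show \(k+l\le 6\), whence by symmetry \(k+m\le 6\) and \(l+m\le 6\), and these three inequalities are exactly the condition \((k,l,m)\in\mathcal N\). Contracting \(a\) against the Veronese image of the \(z\)-variable yields a \(3\times3\) matrix \(N_x(z)\) with binary-quadratic entries; a curve parallel to the \(x\)-axis with \(z\)-coordinate \(r\) puts a Veronese vector into the left kernel of \(N_x(r)\), and a curve parallel to the \(y\)-axis with \(z\)-coordinate \(r\) puts one into the right kernel of \(N_x(r)\); in either case \(r\) is a root of \(\Delta(z):=\det N_x(z)\), a binary sextic. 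By (c), a curve parallel to the \(x\)-axis and one parallel to the \(y\)-axis cannot share a \(z\)-coordinate (they would meet at a point), so over each root of \(\Delta\) at most one of the two types occurs; over a simple root \(N_x(r)\) has rank \(2\) and carries at most one such curve, over a root where \(N_x(r)\) has rank \(\le 1\) (necessarily a zero of \(\Delta\) of order \(\ge 2\)) the relevant kernel is a line, meeting the Veronese conic in at most two points, and \(N_x(r)=0\) is impossible because \(S\) contains no coordinate surface \(\{z=r\}\). Summing over the roots, the number of curves parallel to the \(x\)- or \(y\)-axis is at most \(\deg\Delta=6\); the case \(\Delta\equiv 0\) puts a curve in the base locus of the net, again forcing a coordinate surface into \(S\), so is excluded. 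Applying the same argument to the \(x\)- and \(y\)-discriminants gives \(l+m\le 6\) and \(k+m\le 6\), so \(\mathcal U_{k,l,m}=\emptyset\) off \(\mathcal N\).

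The main difficulty is the second step: confirming that a general member of the constructed \((17-k-l-m)\)-dimensional family is genuinely pure of type \((k,l,m)\) --- that the incidence conditions are independent on \(\mathcal N\), that no accidental parallel curves or \((-2)\)-classes intrude, and that \(\mathcal B(S)\) is an actual \(\mathbb Z\)-basis of \(\Pic(S)\) --- which is tightest for the boundary triples \((6,0,0)\), \((5,1,1)\), \((4,2,2)\). The triple \((3,3,3)\), where all three discriminants are entirely consumed by parallel curves and there is no room to spare, is genuinely exceptional; that is why it is set aside in the dimension statement.
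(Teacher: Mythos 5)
The existence half is where your argument has a genuine gap. Your primary route rests on the parenthetical claim that the \(3(k+l+m)\) linear conditions on the tensor are independent and cut out surfaces with Picard lattice exactly \(\Lambda_{k,l,m}\), ``confirmable by exhibiting one tensor realizing the configuration and invoking upper-semicontinuity of Picard rank.'' No such tensor is exhibited, and producing one is precisely the hard step: explicit equations are known in the literature only for types \((0,0,0)\) and \((1,0,0)\), and verifying that a given \(Q\) has no accidental classes (so that \(\mathcal{B}(S)\) is a genuine \(\mathbb{Z}\)-basis, not merely a \(\mathbb{Q}\)-basis of a finite-index sublattice) is exactly what one cannot do by inspection. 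The paper runs the construction in the opposite direction: an explicit primitive embedding \(L_{k,l,m}\leq L_{K3}\) plus surjectivity of the period map produce an abstract K3 with \(\Pic\cong L_{k,l,m}\) and a prescribed nef chamber (the chamber is pinned down by embedding \(L_{k,l,m}\) into the Picard lattice of an auxiliary surface \(\tilde S\)); then the real work --- Lemma \ref{PrimeBasis} and Proposition \ref{K3embed}, using Riemann--Roch and Mayer/Saint-Donat --- shows the three isotropic classes are base-point-free elliptic pencils whose product map is an embedding into \(\mathbb{P}^1\times\mathbb{P}^1\times\mathbb{P}^1\) (not a \(2{:}1\) cover onto \(\mathbb{P}^2\) or a Hirzebruch surface) and that the image carries no parallel curves beyond the prescribed ones. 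Your one-sentence alternative (``period map and Torelli, Nikulin's criterion'') points at this route but supplies none of that content, and the dimension count \(26-3(k+l+m)+2(k+l+m)-9\) is only valid once independence is established, which again presupposes the existence statement.

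Your non-existence argument is a genuinely different idea from the paper's, and an attractive one: bounding the number of \(x\)- and \(y\)-parallel curves by \(\deg\det N_x(z)=6\), with the corank-versus-multiplicity bookkeeping and condition (c) ruling out curves of both types over one root; and the three pairwise inequalities \(k+l,k+m,l+m\le 6\) do characterize \(\mathcal N\) exactly. The paper instead argues lattice-theoretically: each of \(L_{7,0,0},L_{6,1,0},L_{5,2,0},L_{4,3,0}\) contains an extra effective \((-2)\)-class that would have to be yet another parallel curve, contradicting purity. However, your treatment of the degenerate case \(\Delta\equiv 0\) is wrong as stated: identical vanishing of \(\det N_x(z)\) does not put a curve in the base locus of the net \(\langle Q_{x,0},Q_{x,1},Q_{x,2}\rangle\), and does not force a coordinate hypersurface into \(S\). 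A rank-two matrix whose kernel misses the Veronese conic can perfectly well define a smooth irreducible \((2,2)\) fiber (e.g.\ \(x_0^2y_0^2+x_0x_1y_1^2+x_1^2y_0^2+x_1^2y_1^2\)), so a family with \(N_x(z)\) everywhere singular is not obviously excluded for a smooth irreducible \(S\), and ruling it out for pure \(S\) requires an argument you do not give. So the elegant bound \(k+l\le 6\) is not yet a proof.
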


We prove Theorem \ref{Existence} in \S \ref{FIND}. The proof relies on the surjectivity of the period map for K3 surfaces to show the existence of \(S \in \mathcal{U}_{k,l,m}\), and thus does not yield any explicit equations defining pure K3 surfaces in \(\mathbb{P}^1 \times \mathbb{P}^1 \times \mathbb{P}^1\). Baragar and van Luijk \cite{BvL} have given explicit equations for some pure K3 surfaces of type \((0,0,0)\), and Barager \cite{Ba2} has given explicit equations for some pure K3 surfaces of type \((1,0,0)\). Little else in the direction of concrete examples has appeared in the literature, and it is typically quite challenging to show that a particular polynomial \(Q\) defines a pure K3 surface. We do not know if \(\mathbb{P}^1 \times \mathbb{P}^1 \times \mathbb{P}^1\) contains pure K3 surfaces of type \((3,3,3)\).

Theorem \ref{Existence} shows that we can compute and compare entropies among many different types of K3 surface automorphisms even by focusing only on automorphisms of pure K3 surface automorphisms.

\begin{theorem}\label{Entropies}
As \(S\) varies among all pure K3 surfaces, \(\lambda(f)\) depends only on the type of \(S\). Writing \(\lambda(f)=\lambda(k,l,m)\) as a function of the type of \(S\), we have \(\lambda(k,l,m) > \lambda(k',l',m')\) whenever \((k,l,m) < (k',l',m')\).
\end{theorem}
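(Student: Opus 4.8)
The plan is to make $f^{*}$ on $\Pic(S)$ completely explicit in the basis $\mathcal B(S)$, observe that the resulting matrix is governed by the type $(k,l,m)$ alone, and then follow its spectral radius as $(k,l,m)$ increases.

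\smallskip

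First I would compute $\tau_{x}^{*},\tau_{y}^{*},\tau_{z}^{*}$ by hand. Regard $\tau_{x}$ as the deck transformation of the generically two-to-one morphism $\pi_{yz}\colon S\to\mathbb P^{1}\times\mathbb P^{1}$ forgetting $x$. It preserves every fiber over the $y$- and $z$-axes, so $\tau_{x}^{*}E_{y}=E_{y}$ and $\tau_{x}^{*}E_{z}=E_{z}$; it preserves each curve parallel to the $x$-axis lying on $S$ (such a curve is an entire fiber of $\pi_{yz}$), so $\tau_{x}^{*}$ fixes every class in $\mathcal B_{x}(S)$; a curve parallel to the $y$- (resp.\ $z$-) axis on $S$ is a section of $\pi_{yz}$ over a ruling of $\mathbb P^{1}\times\mathbb P^{1}$, so $\tau_{x}^{*}[C]=E_{z}-[C]$ (resp.\ $E_{y}-[C]$); finally a fiber $E_{x=p}$ is a section of $\pi_{yz}$ over the $(2,2)$-curve $\{Q(p,\,\cdot\,,\,\cdot\,)=0\}$, which passes once through each point contracted by $\pi_{yz}$, so comparing $\pi_{yz}^{*}\{Q(p,\,\cdot\,,\,\cdot\,)=0\}=2E_{y}+2E_{z}$ with the divisor $E_{x=p}+\tau_{x}(E_{x=p})+\sum_{C\in\mathcal B_{x}(S)}C$ gives
\[
\tau_{x}^{*}E_{x}=-E_{x}+2E_{y}+2E_{z}-\sum_{C\in\mathcal B_{x}(S)}[C],
\]
with cyclic permutations of this reasoning describing $\tau_{y}^{*}$ and $\tau_{z}^{*}$. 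Every one of these formulas mentions only the $E_{\omega}$'s and individual members of, or sums over, $\mathcal B_{x}(S),\mathcal B_{y}(S),\mathcal B_{z}(S)$, so the matrix of $f^{*}=\tau_{x}^{*}\tau_{y}^{*}\tau_{z}^{*}$ in the basis $\mathcal B(S)$ (ordered $E_{x},E_{y},E_{z}$, then $\mathcal B_{x}(S)$, then $\mathcal B_{y}(S)$, then $\mathcal B_{z}(S)$) is determined by $(k,l,m)$. This establishes the first assertion; write $\lambda(k,l,m)$ for the resulting spectral radius.

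\smallskip

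Next I would shrink the matrix. Set $\sigma_{\omega}:=\sum_{C\in\mathcal B_{\omega}(S)}[C]$, let $U$ be the span of $E_{x},E_{y},E_{z},\sigma_{x},\sigma_{y},\sigma_{z}$, and let $V_{\omega}^{0}$ be the span of the differences of members of $\mathcal B_{\omega}(S)$. The formulas above show that $f^{*}$ acts as the identity on $V_{x}^{0}\oplus V_{y}^{0}\oplus V_{z}^{0}$ and preserves $U$, which has dimension at most $6$; hence $\lambda(k,l,m)$ is the spectral radius of $M_{k,l,m}:=f^{*}|_{U}$. Reading off $M_{k,l,m}$, one sees that it depends on $k$ only through the single column computing $f^{*}\sigma_{x}$, so $M_{k,l,m}=M_{0,l,m}+k\,v_{l}\,\phi$ where $\phi$ extracts the $\sigma_{x}$-coordinate and $v_{l}$ does not involve $k$; the matrix-determinant lemma then yields
\[
\chi_{k,l,m}(t):=\det(tI-M_{k,l,m})=\chi_{0,l,m}(t)-k\,g_{l,m}(t),\qquad g_{l,m}(t)=\phi\bigl(\operatorname{adj}(tI-M_{0,l,m})\,v_{l}\bigr),
\]
so $\chi_{k,l,m}$ is affine in $k$ with slope $-g_{l,m}$, a polynomial not involving $k$ (for example $g_{0,0}(t)=-2t(t+1)(t-1)^{3}$). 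The same holds with $l$ or $m$ in place of $k$.

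\smallskip

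Now the monotonicity. Using the explicit characteristic polynomials one checks, over the finitely many types in $\mathcal N$, that $\lambda(k,l,m)>2$ and that $g_{l,m}(t)<0$ for $t\ge 2$ (immediate from the factored form when $l=m=0$). Since $f^{*}$ preserves the positive cone of $\Pic(S)$, its restriction $M_{k,l,m}$ preserves the positive cone of $U$, so (Perron--Frobenius for cones) the spectral radius $\lambda(k,l,m)$ is a real eigenvalue of $M_{k,l,m}$; being the largest, it makes $\chi_{k,l,m}\ge 0$ on $[\lambda(k,l,m),\infty)$. Because $\lambda(k,l,m)>2$ and $g_{l,m}<0$ on $[2,\infty)$, the identity above gives $\chi_{k+1,l,m}(t)=\chi_{k,l,m}(t)-g_{l,m}(t)>\chi_{k,l,m}(t)\ge 0$ for all $t\ge\lambda(k,l,m)$, so $\chi_{k+1,l,m}$ has no real root in $[\lambda(k,l,m),\infty)$; as $\lambda(k+1,l,m)$ is itself a real root of $\chi_{k+1,l,m}$, this forces $\lambda(k+1,l,m)<\lambda(k,l,m)$. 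The six composition orders of $\tau_{x},\tau_{y},\tau_{z}$ are pairwise conjugate or mutually inverse, so $\lambda(k,l,m)$ is symmetric in $(k,l,m)$; it is therefore strictly decreasing in each coordinate separately, and incrementing one coordinate at a time --- all intermediate types lie in $\mathcal N$ when both endpoints do --- gives $\lambda(k,l,m)>\lambda(k',l',m')$ whenever $(k,l,m)<(k',l',m')$.

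\smallskip

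The principal obstacle is the determinant bookkeeping of the middle step: recording $M_{k,l,m}$, checking the rank-one dependence on each of $k,l,m$, and above all computing the polynomials $g_{l,m}$ --- equivalently the family $\chi_{k,l,m}$, which should factor as a product of cyclotomic polynomials and a reciprocal polynomial of small degree whose largest root is $\lambda(k,l,m)$ --- and then verifying the two sign facts, $g_{l,m}<0$ on $[2,\infty)$ and $\lambda(k,l,m)>2$ for the types in $\mathcal N$, on which the comparison rests.
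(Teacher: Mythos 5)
Your computation of the involutions agrees with the paper's Proposition \ref{fMatrices} (the paper pins down \(\tau_x^*E_x\) by the involution-plus-isometry constraint rather than by pulling back \(\pi_{yz}(E_{x=p})\), but the formulas and the use of the conjugacy of the six composition orders, as in Lemma \ref{fOrder}, are the same). Where you genuinely diverge is the monotonicity: the paper simply tabulates the spectral radius for each of the finitely many types (computed in Mathematica) and checks the inequalities by inspection, whereas you reduce \(f^*\) to the at-most-\(6\)-dimensional invariant subspace \(U\) (correct, since \(f^*\) — though not each \(\tau_\omega^*\) individually — is the identity on the span of differences), show \(\chi_{k,l,m}\) is affine in each of \(k,l,m\) via the matrix-determinant lemma, and deduce strict decrease from a sign analysis. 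That buys a structural explanation of \emph{why} entropy drops as curves are added, rather than a coincidence read off a table; the price is that you still must compute the polynomials \(g_{l,m}\) and verify two sign facts by a finite check, so the computational load is comparable.

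Two points need repair. First, your blanket claim that \(\lambda(k,l,m)>2\) for all types in \(\mathcal{N}\) is false: the paper computes \(\lambda(3,3,3)=1\). Your argument survives because the bound is only invoked for the \emph{smaller} type in each single-coordinate increment, and \((3,3,3)\) is maximal in \(\mathcal{N}\), so it is never the smaller type; but you should state the hypothesis that way. Second, \(\dim U\) jumps (e.g.\ from \(3\) to \(4\)) when a coordinate passes from \(0\) to \(1\), so \(\chi_{k,l,m}\) changes degree and the identity \(\chi_{k+1,l,m}=\chi_{k,l,m}-g_{l,m}\) does not literally hold for the increment \(k=0\to1\); you need to work with a formal \(6\times 6\) matrix (keeping \(\sigma_\omega\) as a basis vector even when \(\mathcal{B}_\omega(S)=\emptyset\), which only contributes eigenvalues \(\pm1\)) or handle those increments separately.
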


We prove Theorem \ref{Entropies} in \S \ref{COMPUTE} by computing \(\lambda(f)\) for every pure K3 surface. We note that \(\lambda(f)\) actually depends only on the unordered triple \((k,l,m)\); that is, \(\lambda(k',l',m')=\lambda(k,l,m)\) if \((k',l',m')\) is a reordering of \((k,l,m)\). However, the computation of \(f^*\) does depend on the order of \((k,l,m)\). We compute \(\lambda(3,3,3) = 1\), which suggests that \(f\) has some very special behavior on pure K3 surfaces of type \((3,3,3)\) if any exist (and so perhaps suggests that such K3 surfaces should not exist).

Theorems \ref{Existence} and \ref{Entropies} show that \(\lambda(f)\) is a strictly lower semi-continuous function of the parameters in the union of all of the spaces \(\mathcal{U}_{k,l,m}\). Thus the set of all pure K3 surfaces provides an example that demonstrates the following result of Xie.

\begin{theorem}[\cite{Xie}, Theorem 4.3]\label{Xie}
Suppose \(W\) is a quasi-projective variety, \(\mathcal{S} \rightarrow W\) is a family of projective surfaces, and \(F\) is an automorphism of \(\mathcal{S}\) that preserves each fiber over \(W\). For \(s \in W\), let \(h(s)\) denote the entropy of the restriction of \(F\) to the fiber over \(s\). Then \(h\) is a lower semi-continuous function on \(W\).
\end{theorem}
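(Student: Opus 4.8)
The plan is to recast the statement in cohomological terms, reduce it to a one-parameter degeneration, and then control the extra cohomology that a singular special fibre can carry.

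First I would appeal to the theorems of Gromov and Yomdin, in the form valid for (possibly singular) complex projective surfaces: for each \(s\in W\) the entropy of \(F|_{\mathcal{S}_s}\) equals \(\log\lambda_1(F|_{\mathcal{S}_s})\), the logarithm of the first dynamical degree, and for an automorphism of a projective surface this dynamical degree is the spectral radius \(\rho\big(F_s^*\mid H^*(\mathcal{S}_s;\mathbb{C})\big)\) of the induced action on cohomology. Hence it suffices to show that \(s\mapsto\rho(F_s^*)\) is lower semi-continuous. Since the first dynamical degrees of the fibres take only finitely many values, \(h\) is a constructible function on \(W\), and a constructible function is lower semi-continuous precisely when its restriction to every curve in \(W\) is. Normalising such a curve, localising at a point and spreading out, I am reduced to a one-parameter family over the spectrum of a discrete valuation ring — analytically, a proper family \(\mathcal{S}\to\Delta\) over a small disk with smooth general fibre — where the assertion becomes the single inequality \(h(0)\le h(t)\) for \(0\neq t\) close to \(0\).

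In this situation the basic tool is the specialization homomorphism \(\mathrm{sp}\colon H^*(\mathcal{S}_0;\mathbb{C})\to H^*(\mathcal{S}_t;\mathbb{C})\): as \(\mathcal{S}\to\Delta\) is proper, the special fibre \(\mathcal{S}_0\) is a deformation retract of (a neighbourhood in) the total space, so \(H^*(\mathcal{S}_0)\cong H^*(\mathcal{S})\), and \(\mathrm{sp}\) is the resulting restriction map to a nearby fibre; it is equivariant for the action induced by \(F\). Its image is an \(F_t^*\)-invariant subspace of \(H^*(\mathcal{S}_t)\), and \(F_0^*\) acts on the quotient \(H^*(\mathcal{S}_0)/\ker(\mathrm{sp})\cong\operatorname{im}(\mathrm{sp})\) through \(F_t^*\). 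Hence
\[
\rho(F_0^*)\le\max\!\big(\rho(F_0^*\mid\ker\mathrm{sp}),\ \rho(F_t^*)\big).
\]
So \(h(0)\le h(t)\) will follow once I prove \(\rho(F_0^*\mid\ker\mathrm{sp})=1\) — that is, that the vanishing cohomology of the degeneration carries no exponential dynamics.

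I expect that last point to be the principal obstacle. After a finite base change and a modification turning the special fibre into a reduced normal crossings divisor (Hironaka; this replaces \(F\) by a fibre-preserving \emph{birational} automorphism of the new total space, which is harmless because passing to a dominant rational semi-conjugate cannot increase \(\lambda_1\) or the entropy), the Clemens--Schmid exact sequence — equivalently the weight spectral sequence of the nearby cycles — identifies \(\ker(\mathrm{sp})\) with a subquotient assembled from the \(H^0\) and \(H^1\) of the finitely many irreducible components of the special fibre and of their double intersection curves, and never from the \(H^2\) of a component. Since \(F_0\) permutes this finite collection of strata and acts with finite order on the \(H^0\) and \(H^1\) of each of them (automorphisms of smooth projective curves have finite order on \(H^1\)), the operator \(F_0^*\) has finite order on \(\ker(\mathrm{sp})\), so its spectral radius there equals \(1\), which finishes the argument. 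The steps requiring the most care are the reduction to a semistable model carried out compatibly with the dynamics, the bookkeeping caused by \(F\) becoming only birational on that model, and the precise weight-filtration description of \(\ker(\mathrm{sp})\).
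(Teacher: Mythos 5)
First, a caveat: the paper does not prove this statement --- it is quoted from Xie (Theorem 4.3 of \cite{Xie}) --- so there is no in-paper argument to compare yours against, and I can only assess the proposal on its own terms. Your skeleton (Gromov--Yomdin, constructibility plus reduction to a one-parameter degeneration, the equivariant specialization map, and the inequality \(\rho(F_0^*)\le\max(\rho(F_0^*\mid\ker\mathrm{sp}),\rho(F_t^*))\)) is a reasonable plan for the statement as literally given. But the step you yourself flag as the crux is where the argument breaks. The Clemens--Schmid description of \(\ker(\mathrm{sp})\) lives on the normal-crossings special fibre of a semistable model, and the finite-order conclusion requires an honest, functorial action of \(F\) on that fibre and on its strata. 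You concede that after base change and modification \(F\) survives only as a birational self-map of the new total space, and then invoke ``a dominant rational semi-conjugate cannot increase entropy.'' That principle compares entropies; it does not produce an \(F\)-equivariant exact sequence or a well-defined finite-order action on the \(H^0\) and \(H^1\) of the strata (a birational self-map does not act functorially on cohomology at all), so the key claim \(\rho(F_0^*\mid\ker\mathrm{sp})=1\) is unsupported as written. The repair is to build the model equivariantly --- the base change \(t\mapsto t^n\) commutes with \(F\) because \(F\) acts trivially on the base, and functorial resolution of the normalized pullback keeps the lift an automorphism --- and then to transfer the conclusion back to the original special fibre via the pullback injection \(H^*(\mathcal{S}_0,\mathbb{Q})\hookrightarrow H^*(\tilde{\mathcal{S}}_0,\mathbb{Q})\) along the proper surjection \(\tilde{\mathcal{S}}_0\to\mathcal{S}_0\). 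You also assume without comment that the general fibre of the one-parameter family is smooth, and that Gromov--Yomdin holds on the possibly singular, possibly reducible special fibre; both require the same equivariant-resolution bookkeeping.

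Second, and more importantly for this paper: even once repaired, your argument has content only at singular fibres. If the family is smooth and proper and \(F\) is a genuine automorphism of the total space preserving fibres, then \(\mathrm{sp}\) is an isomorphism and your inequality just says \(h\) is locally constant. But the family relevant here consists entirely of smooth K3 surfaces, and \(h\) genuinely drops on the strata \(\mathcal{U}_{k,l,m}\); this is consistent only because the fibrewise automorphisms \(f\) do not glue to an automorphism of the total space --- \(F\) is merely a birational involution of \(\mathbb{P}^{26}\times\mathbb{P}^1\times\mathbb{P}^1\times\mathbb{P}^1\) whose indeterminacy locus meets the special fibres. That is exactly the situation addressed by the parenthetical following the theorem (birational self-maps, first dynamical degrees), and it is the version Xie proves and the paper uses. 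Your specialization-map mechanism is silent there, because \(F^*\) is not an endomorphism of the local system \(R^2\pi_*\mathbb{Z}\); the real content of the theorem is controlling how the fibrewise actions \(f_s^*\) fail to vary locally constantly at fibres met by the indeterminacy locus. So the proposal proves a statement whose interesting case for this paper it does not reach.
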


(In \cite{Xie}, Theorem \ref{Xie} is phrased more generally with entropies replaced by first dynamical degrees, in which case \(F\) and its restrictions need only be birational self-maps rather than automorphisms.)

Theorem \ref{Xie} applies to \(\lambda(f)\) in the following way: \(\mathbb{P}^{26} \times \mathbb{P}^1 \times \mathbb{P}^1 \times \mathbb{P}^1\) admits a birational involution that restricts to \(f\) on every fiber \(\mathbb{P}^1 \times \mathbb{P}^1 \times \mathbb{P}^1\) of the projection to \(\mathbb{P}^{26}\) where \(f\) is well-defined; this involution preserves the variety in \(\mathbb{P}^{26} \times \mathbb{P}^1 \times \mathbb{P}^1 \times \mathbb{P}^1\) defined by \(Q(x_0,x_1,y_0,y_1,z_0,z_1)=0\), and hence realizes most quasi-subvarieties of \(\mathbb{P}^{26}\) as paramater spaces for families of K3 surface automorphisms of the sort treated in Theorem \ref{Xie}.

Although pure K3 surfaces are very general among all K3 surfaces \(S \subseteq \mathbb{P}^1 \times \mathbb{P}^1 \times \mathbb{P}^1\), they certainly do not account for all \(S\). One could adapt the procedure in this paper to the computation of \(\lambda(f)\) among all \(S\) satisfying (a) and (b) but not necessarily (c) in Definition \ref{PureType}, since \(\Pic(S)\) and \(f^*\) can still be sufficiently well understood for such \(S\); the challenge then would be to determine which arrangements of curves parallel to axes actually occur on such \(S\). Even so, as first observed by Rowe \cite{Row}, a K3 surface \(S\) can fail even to satisfy (b), in which case it is impossible to compute \(\lambda(f)\) in the manner used here without some means of determing \(\Pic(S)\); the K3 surface \(\tilde{S} \subseteq \mathbb{P}^1 \times \mathbb{P}^1 \times \mathbb{P}^1\) below is an example which fails to satisfy (b).
\\
\\
\noindent \textbf{Acknowledgements.} We thank Mattias Jonsson for providing the opportunity to do some of the calculations in this paper as part of an REU project. We thank Mattias Jonsson and Serge Cantat for suggestions which helped to clarify Theorem \ref{Xie}.

\section{Finding Pure K3 Surfaces}\label{FIND}

Every prime divisor on \(\mathbb{P}^1 \times \mathbb{P}^1 \times \mathbb{P}^1\) is the zero locus of an irreducible tri-homogeneous polynomial (and every such zero locus is a prime divisor). The classes of \(\{x_0=0\}\), \(\{y_0=0\}\), and \(\{z_0=0\}\) generate \(\Pic(\mathbb{P}^1 \times \mathbb{P}^1 \times \mathbb{P}^1)\). It is a well-known fact (e.g.,\cite{Maz},\cite{Wan},\cite{McM}) that every smooth prime divisor \(S\) of tri-degree \((2,2,2)\) is a K3 surface; one may verify this by using the Lefschetz hyperplane theorem (applied to \(S\) as a hyperplane section of \(\mathbb{P}^1 \times \mathbb{P}^1 \times \mathbb{P}^1\)) to show that \(h^1(S)=0\) and using the adjunction formula (applied to \(S\) as a divisor on \(\mathbb{P}^1 \times \mathbb{P}^1 \times \mathbb{P}^1\)) to show that \(K_S\) is trivial.

\begin{lemma}\label{notK3}
Let \(S'\) be a smooth prime divisor on \(\mathbb{P}^1 \times \mathbb{P}^1 \times \mathbb{P}^1\) of tri-degree \((a,b,c)\). If \(abc > 0\) and \((a,b,c) \neq (2,2,2)\), then \(S'\) is neither a K3 surface nor a copy of \(\mathbb{P}^2\) nor a Hirzebruch surface. If \(abc=0\), then \(S'\) is a product with one of the coordinate copies of \(\mathbb{P}^1\) as factor.
\end{lemma}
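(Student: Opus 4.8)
The case $abc=0$ is essentially formal, so I would dispatch it first. Since the ambient $\mathbb{P}^1\times\mathbb{P}^1\times\mathbb{P}^1$ carries the evident action permuting its three factors, under which both the hypothesis and the conclusion are invariant, I may assume $c=0$; then the irreducible tri-homogeneous polynomial cutting out $S'$ has degree $0$ in $(z_0,z_1)$ and hence involves only $x_0,x_1,y_0,y_1$. Thus $S'$ is the preimage, under the projection of $\mathbb{P}^1\times\mathbb{P}^1\times\mathbb{P}^1$ to the $x$- and $y$-factors, of the curve $C\subseteq\mathbb{P}^1\times\mathbb{P}^1$ defined by that same polynomial; that is, $S'=C\times\mathbb{P}^1$ with the remaining $\mathbb{P}^1$ being the $z$-factor. (If two of $a,b,c$ vanish then $S'$ is a point times $\mathbb{P}^1\times\mathbb{P}^1$, still a product of the required shape; if all three vanish, $S'$ is not a prime divisor.) This gives the asserted product structure, so the substance of the lemma is the case $abc>0$, $(a,b,c)\neq(2,2,2)$, which I would handle with two short intersection-theoretic computations on $S'$.

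Let $H_1,H_2,H_3$ denote the pullbacks to $\mathbb{P}^1\times\mathbb{P}^1\times\mathbb{P}^1$ of a point from each of the three factors, so that $H_i^2=0$, $H_1H_2H_3=1$, and $[S']=aH_1+bH_2+cH_3$; let $L_i$ be the restriction of $H_i$ to $S'$. By the projection formula $L_i^2=0$ and $L_1\cdot L_2=c$, $L_1\cdot L_3=b$, $L_2\cdot L_3=a$, so the Gram matrix of $(L_1,L_2,L_3)$ in $\NS(S')$ has determinant $2abc\neq 0$. Hence $L_1,L_2,L_3$ are linearly independent in $\NS(S')$ and $\rho(S')\geq 3$. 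Since $\mathbb{P}^2$ has Picard rank $1$ and every Hirzebruch surface (including $\mathbb{F}_0=\mathbb{P}^1\times\mathbb{P}^1$) has Picard rank $2$, this already shows $S'$ is neither $\mathbb{P}^2$ nor a Hirzebruch surface.

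To exclude K3 surfaces I would use adjunction on the ambient space: from $K_{\mathbb{P}^1\times\mathbb{P}^1\times\mathbb{P}^1}=\mathcal{O}(-2,-2,-2)$ one gets $K_{S'}=\mathcal{O}(a-2,b-2,c-2)\big|_{S'}$, and intersecting with the $L_i$ and expanding as above yields
\[K_{S'}\cdot L_1=2(bc-b-c),\qquad K_{S'}\cdot L_2=2(ca-c-a),\qquad K_{S'}\cdot L_3=2(ab-a-b).\]
If $K_{S'}$ were trivial, all three would vanish, forcing $(a-1)(b-1)=(b-1)(c-1)=(c-1)(a-1)=1$ and hence $a=b=c=2$ (recall $a,b,c$ are positive integers), contrary to hypothesis. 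So $K_{S'}$ is nontrivial, whereas a K3 surface has trivial canonical class; therefore $S'$ is not a K3 surface. (As a variant, the sequence $0\to\mathcal{O}(-S')\to\mathcal{O}\to\mathcal{O}_{S'}\to 0$ gives $\chi(\mathcal{O}_{S'})=1-(1-a)(1-b)(1-c)$, which takes the K3 value $2$ only when $(a,b,c)=(2,2,2)$.)

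I do not anticipate a serious obstacle. The two points that want a little care are extracting the linear independence of $L_1,L_2,L_3$ in $\NS(S')$ from the nonvanishing of their Gram determinant — which is why I would avoid appealing to a Lefschetz-type statement about $\Pic$ of an ample divisor — and remembering that $\mathbb{P}^1\times\mathbb{P}^1$ is itself a Hirzebruch surface and so must be ruled out via the rank bound rather than overlooked.
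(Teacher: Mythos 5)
Your proof is correct and follows essentially the same route as the paper's: restrict the three hyperplane classes to \(S'\), use their intersection numbers to get \(\rho(S')\geq 3\) (ruling out \(\mathbb{P}^2\) and Hirzebruch surfaces), apply adjunction to see \(K_{S'}\) is nontrivial unless \((a,b,c)=(2,2,2)\), and read off the product structure from the defining polynomial when \(abc=0\). The only cosmetic difference is that you certify \(K_{S'}\neq 0\) by pairing against the \(L_i\) rather than by invoking the linear independence of \([D_1],[D_2],[D_3]\) directly, which is an equivalent computation.
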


\begin{proof}
First suppose \(abc > 0\) and \((a,b,c) \neq (2,2,2)\). The effective divisors
\[D_1 := \{x_0=0\}|_{S'}, D_2 := \{y_0=0\}|_{S'}, \text{ and } D_3 := \{z_0=0\}|_{S'}\]
all satisfy \((D_j \cdot D_j) = 0\) and \((D_j \cdot D_{j' \neq j}) > 0\). Thus \(\{[D_1],[D_2],[D_3]\}\) is a linearly independent set in \(\Pic(S')\). By the adjunction formula,
\[K_{S'} = (a-2)[D_1] + (b-2)[D_2] + (c-2)[D_3]\]
--which is not trivial. So \(S'\) is not a K3 surface. Also, \(\rho(S') \geq 3\) implies that \(S'\) is neither a copy of \(\mathbb{P}^2\) nor a Hirzebruch surface.

If \(abc=0\), the claim is evident from the form of the polynomial defining \(S'\).
\end{proof}

A lattice of rank \(r \in \mathbb{N}\) is a group \(L \cong \mathbb{Z}^r\) equipped with a bilinear form \((\_ \cdot \_)_L\) which is integral, symmetric, and non-degenerate. Given a basis for \(L\), there is a unique integer matrix \(M\) such that \((\vec{g}_1 \cdot \vec{g}_2)_L = \vec{g}_1^{\, \, t} M \vec{g}_2\) for all \(\vec{g}_1,\vec{g}_2 \in L\). Since \(M\) is symmetric with \(\det(M) \neq 0\), its eigenvalues are all non-zero real numbers. The signature of \(L\) is \((p,q)\), where \(p\) and \(q\) denote the number (counting multiplicity) of, respectively, positive and negative eigenvalues of \(M\). If \(T\) is a projective K3 surface, it is a well-known consequence of the Hodge index theorem (e.g.,\cite{bhpv}) that the intersection form makes \(\Pic(T) \cong \NS(T)\) into a lattice of signature \((1,\rho(T)-1)\).

For every K3 surface \(S \subseteq \mathbb{P}^1 \times \mathbb{P}^1 \times \mathbb{P}^1\), the intersection form on \(\langle E_x,E_y,E_z \rangle \leq \Pic(S)\) is given by
\[M_{0,0,0} := \left( \begin{array}{ccc}
0 & 2 & 2 \\
2 & 0 & 2 \\
2 & 2 & 0
\end{array} \right).\]
For every ordered triple \((k,l,m)\) of non-negative integers, the conditions in Definition 1.1 indicate how to write a matrix \(M_{k,l,m}\) that gives the intersection form on \(\Pic(S)\) in the basis \(\mathcal{B}(S)\) whenever \(S\) is pure of type \((k,l,m)\). For example,
\[M_{2,0,1} = \left( \begin{array}{cccccc}
0 & 2 & 2 & 1 & 1 & 0 \\
2 & 0 & 2 & 0 & 0 & 0 \\
2 & 2 & 0 & 0 & 0 & 1 \\
1 & 0 & 0 & -2 & 0 & 0 \\
1 & 0 & 0 & 0 & -2 & 0 \\
0 & 0 & 1 & 0 & 0 & -2
\end{array} \right).\]

\begin{lemma}\label{PureDet}
For any ordered triple \((k,l,m)\) of non-negative integers,
\[\det(M_{k,l,m}) = -(-2)^{k+l+m-3}(128-16(k+l+m)+klm).\]
\end{lemma}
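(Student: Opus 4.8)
The plan is to exhibit $M_{k,l,m}$ as a bordered block matrix and to compute its determinant with a single Schur-complement step. Order the basis $\mathcal{B}(S)$ so that $E_x$, $E_y$, $E_z$ come first, then the $k$ classes of curves parallel to the $x$-axis, then the $l$ classes parallel to the $y$-axis, then the $m$ classes parallel to the $z$-axis, and set $n := k+l+m$. Reading off the entries dictated by Definition \ref{PureType} together with the standard intersection numbers among the classes $E_\omega$ and the curves parallel to axes (namely $(E_\omega \cdot E_\omega)=0$ and $(E_\omega \cdot E_{\omega'})=2$ for $\omega \ne \omega'$; each curve parallel to an axis has self-intersection $-2$; distinct such curves are mutually orthogonal; and a curve parallel to the $\omega$-axis has intersection number $1$ with $E_\omega$ and $0$ with the other two fiber classes), one has
\[M_{k,l,m} = \begin{pmatrix} M_{0,0,0} & J \\ J^{t} & -2\,I_{n} \end{pmatrix},\]
where $J$ is the $3 \times n$ matrix whose first $k$ columns equal $e_1$, whose next $l$ columns equal $e_2$, and whose last $m$ columns equal $e_3$.

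Since the lower-right block $-2\,I_{n}$ is invertible, the Schur-complement identity gives
\[\det(M_{k,l,m}) = \det(-2\,I_{n})\cdot\det\!\left(M_{0,0,0} - J(-2\,I_{n})^{-1}J^{t}\right) = (-2)^{n}\,\det\!\left(M_{0,0,0} + \tfrac{1}{2}\,J J^{t}\right).\]
The key simplification is that $J J^{t} = \operatorname{diag}(k,l,m)$: exactly $k$ columns of $J$ equal $e_1$, contributing $k$ to the $(1,1)$-entry, and similarly for $e_2$ and $e_3$, while columns attached to different axes are orthogonal standard basis vectors and contribute nothing off the diagonal. Hence
\[\det(M_{k,l,m}) = (-2)^{n}\,\det\!\begin{pmatrix} k/2 & 2 & 2 \\ 2 & l/2 & 2 \\ 2 & 2 & m/2 \end{pmatrix}.\]
Expanding this $3 \times 3$ determinant yields $\tfrac{klm}{8} - 2n + 16 = \tfrac{1}{8}\bigl(128 - 16n + klm\bigr)$, and rewriting the prefactor via $\tfrac{1}{8}(-2)^{n} = -(-2)^{n-3}$ produces the claimed formula. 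As consistency checks, this recovers $\det(M_{0,0,0}) = 16$ and $\det(M_{2,0,1}) = -80$ (and the degenerate case $n=0$ is just the identity $\det(M_{0,0,0})=16$).

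I do not expect a genuine obstacle here; the content is elementary linear algebra, and the only places needing care are the bookkeeping of the block decomposition — in particular the point that a curve class interacts nontrivially only with its matching $E_\omega$, and there with coefficient $1$ — and tracking the power of $-2$ in the final rearrangement. An alternative is to induct on $n$ by expanding $\det(M_{k,l,m})$ along the row of one curve class, but this is messier: removing that row and the column of the corresponding $E_\omega$ does not return a matrix of the form $M_{k',l',m'}$, so the recursion picks up an extra term. I would therefore present the Schur-complement computation.
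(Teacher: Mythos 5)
Your proof is correct, and every step checks out: the block form of $M_{k,l,m}$ matches the paper's definition (cf.\ the displayed example $M_{2,0,1}$), the Schur-complement identity applies since $-2I_n$ is invertible, $JJ^t=\operatorname{diag}(k,l,m)$ is right, and the $3\times 3$ determinant $abc-4(a+b+c)+16$ with $a=k/2$, $b=l/2$, $c=m/2$ gives $\tfrac18(128-16n+klm)$; combined with $(-2)^n/8=-(-2)^{n-3}$ this is exactly the claimed formula, and your spot checks ($16$ for $(0,0,0)$, $-80$ for $(2,0,1)$) agree. The route is genuinely different from the paper's: the authors simply assert that the formula ``follows by computation'' from the Leibniz permutation expansion of the determinant, offering no structure to the calculation, whereas you exploit the bordered block shape of $M_{k,l,m}$ to reduce everything to one $3\times 3$ determinant. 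Your version is more transparent and more obviously uniform in $(k,l,m)$ --- the Leibniz route, taken literally, requires an argument about which permutations contribute, which the paper leaves to the reader --- at the modest cost of invoking the Schur-complement identity and handling the degenerate case $n=0$ separately, which you do. I would keep your argument as written.
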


\begin{proof}
The formula given follows by computation from the general formula
\[\det((a_{i,j})_{1 \leq i,j \leq n}) = \sum \sgn(\xi) \prod_{i=1}^n a_{i,\xi(i)},\]
where the sum is taken over all permutations \(\xi\) of \(\{1,\dots,n\}\).
\end{proof}

For \((k,l,m)\) such that \(\det(M_{k,l,m}) \neq 0\), which includes all \((k,l,m) \in \mathcal{N}\), let \(L_{k,l,m}\) denote the lattice given by \(M_{k,l,m}\). If \((k',l',m')\) is a reordering of \((k,l,m)\), then \(L_{k',l',m'}\) is isometric to \(L_{k,l,m}\).

For any K3 surface \(T\), the Riemann-Roch theorem and the adjunction formula imply the following useful facts about the intersection form on \(\Pic(T)\) (e.g.,\cite{May},\cite{Dol},\cite{bhpv}):
\begin{itemize}
\item[\(\bullet\)] if \(\mathcal{L} \in \Pic(T)\) satisfies \((\mathcal{L} \cdot \mathcal{L}) \geq -2\), then either \(\mathcal{L}\) is effective or \(-\mathcal{L}\) is effective;
\item[\(\bullet\)] if \(\mathcal{L} \in \Pic(T)\) is effective, then \(h^0(\mathcal{L}) \geq 2 + \frac{1}{2}(\mathcal{L} \cdot \mathcal{L})\);
\item[\(\bullet\)] if \(D \in \Div(T)\) is reduced, effective, and connected, then \(h^0([D]) = 2 + \frac{1}{2}(D \cdot D)\);
\item[\(\bullet\)] if \(D\) is a prime divisor on \(T\), then \(h^0([D]) \geq -2\).
\end{itemize}

\subsection{Global sections in pure Picard lattices}

Fix an ordered triple \((k,l,m)\) of non-negative integers, and suppose \(T\) is a K3 surface such that \(\Pic(T)\) is isometric to \(L_{k,l,m}\). (It is then implicit here that \(\det(M_{k,l,m} \neq 0)\).) Since \(L_{k,l,m}\) contains elements with positive self-intersection, it follows from Grauert's criterion (e.g.,\cite{bhpv}) that \(T\) is projective. Let
\[\mathcal{B} = \{B_1,B_2,B_3,B_{x,1},\dots,B_{x,k},B_{y,1},\dots,B_{y,l},B_{z,1},\dots,B_{z,m}\}\]
be a basis for \(\Pic(T)\) in which \(M_{k,l,m}\) gives the intersection form, and suppose further that each \(B_j\) is nef. For \((k,l,m) \in \mathcal{N}\), we will show that there is an embedding \(T \subseteq \mathbb{P}^1 \times \mathbb{P}^1 \times \mathbb{P}^1\) as a pure K3 surface of type \((k,l,m)\).

If for some \(B_j\) there were \(\mathcal{L} \in \langle B_j \rangle^\perp \leq \Pic(T)\) satisfying \((\mathcal{L} \cdot \mathcal{L})=0\) and \(\mathcal{L} \notin \langle B_j \rangle\), then \(\langle B_j,\mathcal{L} \rangle\) would be a totally isotropic sublattice of \(\Pic(T)\) of rank 2; but it is a well-known fact (e.g.,\cite{McM}) that the signature of \(\Pic(T)\) implies that \(\Pic(T)\) cannot contain a totally isotropic sublattice of rank \(r>1\). It follows that each \(\langle B_j \rangle^\perp\) is negative definite away from \(\langle B_j \rangle\). One can check by computation that every \(\mathcal{L} \in \langle B_1,B_2,B_3, \rangle^\perp\) satisfies \((\mathcal{L} \cdot \mathcal{L}) \equiv 0 \bmod 4\)--so that, in particular, \(\langle B_1,B_2,B_3 \rangle^\perp\) cannot contain the class of any prime divisor on \(T\).

\begin{lemma}\label{PrimeBasis}
Every element of \(\mathcal{B}\) is the class of a prime divisor on \(T\).
\end{lemma}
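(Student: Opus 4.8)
The plan is to split $\mathcal{B}$ into the three square-zero classes $B_1,B_2,B_3$ and the $(-2)$-classes among $B_{x,1},\dots,B_{z,m}$, and to treat the two types separately. In both cases the main tool is the nef-ness of $B_1,B_2,B_3$, together with the two facts recorded just above the lemma: that $\langle B_1,B_2,B_3\rangle^\perp$ contains the class of no prime divisor on $T$, and that each $\langle B_j\rangle^\perp$ is negative definite away from $\langle B_j\rangle$. The first thing to check in every case is effectivity: if $\mathcal{L}\in\mathcal{B}$, then $(\mathcal{L}\cdot\mathcal{L})\ge-2$ makes $\mathcal{L}$ or $-\mathcal{L}$ effective, and since $(\mathcal{L}\cdot B_j)>0$ for at least one $j$ (read off from $M_{k,l,m}$), the nef-ness of that $B_j$ rules out $-\mathcal{L}$, leaving $\mathcal{L}$ effective.

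For a $(-2)$-class, say $\mathcal{L}=B_{x,i}$ (the cases $B_{y,j}$, $B_{z,k}$ being identical up to permuting indices), I would fix an effective divisor $D$ with $[D]=\mathcal{L}$ and write $D=\sum_\Gamma n_\Gamma\Gamma$ over its distinct prime components, with $n_\Gamma\ge1$. Each $(\Gamma\cdot B_j)\ge0$ since $B_j$ is nef, and since $\sum_{j=1}^3(\mathcal{L}\cdot B_j)=1$,
\[\sum_\Gamma n_\Gamma\Bigl(\sum_{j=1}^3(\Gamma\cdot B_j)\Bigr)=1.\]
As these are non-negative integers, exactly one component $\Gamma_0$ has $n_{\Gamma_0}=1$ and $\sum_{j=1}^3(\Gamma_0\cdot B_j)=1$, while every other component $\Gamma$ has $(\Gamma\cdot B_j)=0$ for all $j$, i.e.\ $[\Gamma]\in\langle B_1,B_2,B_3\rangle^\perp$; but that sublattice contains no prime-divisor class, so there are no such $\Gamma$. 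Hence $D=\Gamma_0$, and $\mathcal{L}$ is the class of a prime divisor.

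For a square-zero class, say $B_1$ (the classes $B_2,B_3$ being symmetric), I would first show $|B_1|$ has no fixed part. Writing $B_1=F+M$ with $F$ the fixed and $M$ the moving part, $0=(B_1\cdot B_1)=(B_1\cdot F)+(B_1\cdot M)$ with both terms $\ge0$ (as $B_1$ is nef and $F,M$ are effective) forces $(B_1\cdot M)=0$, so $[M]\in\langle B_1\rangle^\perp$; since also $(M\cdot M)\ge0$ and $\langle B_1\rangle^\perp$ is negative definite away from $\langle B_1\rangle$, we get $(M\cdot M)=0$ and $[M]\in\langle B_1\rangle$, hence $[M]=cB_1$ for some integer $c$ (using that $B_1$ is primitive), with $c\ge1$ since $M$ is effective and non-zero (because $h^0(B_1)\ge2+\tfrac12(B_1\cdot B_1)=2$). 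Then $[F]=(1-c)B_1$ is effective, which, as $B_1\ne0$, forces $c=1$ and $F=0$. With no fixed part and $(B_1\cdot B_1)=0$, the system $|B_1|$ is (by the standard theory of linear systems on K3 surfaces, e.g.\ \cite{May},\cite{bhpv}) a base-point-free pencil; its morphism to $\mathbb{P}^1$ contracts the members of $|B_1|$, so by Stein factorization there is a fibration $T\to\bar C$ whose fiber class is $\tfrac1k B_1$ for some $k\ge1$, and primitivity of $B_1$ forces $k=1$. A general fiber is a smooth connected curve, hence a prime divisor of class $B_1$.

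The step I expect to be the main obstacle is this last one — producing an irreducible curve in the class of the nef, primitive, square-zero divisor $B_1$. The $(-2)$-classes are pinned down purely by the sign constraints of nef-ness against the earlier lattice computation, but for $B_1$ one genuinely has to understand $|B_1|$: the absence of a fixed part, and then base-point-freeness (equivalently, the existence on $T$ of the associated genus-one fibration). Once base-point-freeness is in hand, primitivity of $B_1$ closes the argument at once.
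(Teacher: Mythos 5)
Your proof is correct, and for the \((-2)\)-classes \(B_{\omega,j}\) it coincides with the paper's: decompose an effective representative into prime components, observe that every component must meet some \(B_j\) positively because \(\langle B_1,B_2,B_3\rangle^\perp\) contains no prime-divisor class, and conclude from \(\sum_{j}(B_{\omega,i}\cdot B_j)=1\) that there is exactly one component, with multiplicity one. For the square-zero classes, however, you take a genuinely different and shorter route. The paper splits into two cases: for \(l=m=0\) it uses the congruence \((\mathcal{L}\cdot\mathcal{L})\equiv 0 \bmod 4\) on \(\langle B_1\rangle^\perp\) together with the evenness of \((B_2\cdot\mathcal{L})\) and \((B_3\cdot\mathcal{L})\) to rule out any nontrivial prime decomposition of \(B_1\) directly; for \(l>0\) it introduces the auxiliary effective class \(B_1'=B_1-B_{y,1}\), runs a three-case analysis (\(n=1,2,3\)) of its prime decomposition, and thereby exhibits \(B_1\) as the class of a reduced connected divisor none of whose proper effective subdivisors moves, before invoking Proposition 1 of \cite{May}. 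You instead dispose of the fixed part of \(|B_1|\) in one stroke: nefness gives \((B_1\cdot F)=(B_1\cdot M)=0\), the negative semi-definiteness of \(\langle B_1\rangle^\perp\) away from \(\langle B_1\rangle\) combined with \((M\cdot M)\geq 0\) forces \([M]=cB_1\), and effectivity of \(F\) plus primitivity of \(B_1\) forces \(c=1\), \(F=0\); the same Proposition 1 of \cite{May} then finishes. Your argument is cleaner and uniform in \((k,l,m)\). The one thing it does not deliver is the by-product the paper extracts from its \(n=1,2,3\) case analysis and records in Remark \ref{NoD2} --- namely that only \(n=1\) actually occurs, so that \(E_{\omega=\alpha}-C\) is a prime divisor --- which is used later in the proof of Proposition \ref{fMatrices}; if the lemma were proved your way, that fact would have to be established separately.
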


\begin{proof}
Since \((B_{x,1} \cdot B_{x,1}) = -2\) and \((B_1 \cdot B_{x,1}) = 1\) (assuming \(k>0\)), \(B_{x,1}\) must be effective. Write \(B_{x,1} = [D_1] + \dots + [D_n]\), where each \(D_j\) is a prime divisor (but the prime divisors may not be pairwise distinct a priori). Since \(B_{x,1} \in \langle B_2,B_3 \rangle^\perp\), \((B_{x,1} \cdot B_1)=1\), and no \(D_j\) can have its class in \(\langle B_1,B_2,B_3 \rangle^\perp\), the only possibility is \(n=1\)--so that \(B_{x,1}\) is the class of a prime divisor. It follows similarly that each \(B_{\omega,j}\) is the class of a prime divisor \(D_{\omega,j}\).

We will now show that \(B_1\) is the class of a prime divisor. It will follow similarly that \(B_2\) and \(B_3\) are classes of prime divisors. Each \(B_j\) is effective with \(h^0(B_j) \geq 2\) since it is nef and satisfies \((B_j \cdot B_j) = 0\).

First suppose \(l=m=0\). In this case, \((\mathcal{L} \cdot \mathcal{L}) \equiv 0 \bmod 4\) whenever \(\mathcal{L} \in \langle B_1 \rangle^\perp\). Also, \((B_2 \cdot \mathcal{L})\) and \((B_3 \cdot \mathcal{L})\) are even for every \(\mathcal{L} \in \Pic(T)\). It then follows from the intersection numbers given by \(M_{0,0,0}\) that \(B_1\) cannot be written as a sum of more than one class of a prime divisor.

Now suppose \(l>0\); the case \(m>0\) follows similarly. Since \(B_1' := B_1 - B_{y,1}\) satisfies \((B_1' \cdot B_1')=-2\) and \((B_1' \cdot B_2)=1\), it is effective. Write \(B_1' = [D_1] + \dots + [D_n]\), where each \(D_j\) is a prime divisor (but the prime divisors may not be pairwise distinct a priori). The intersection numbers of \(B_1'\) with \(B_1\), \(B_2\), and \(B_3\) force \(n \leq 3\) and \((D_j \cdot D_j) = -2\) for each \(D_j\). Moreover, there is a unique \(D_j\) satisfying \(([D_j] \cdot B_2) >0\); take \(D_1\) to be this divisor--so that \(([D_1] \cdot B_2)=1\) and \(([D_j] \cdot B_3) > 0\) for \(j>1\).

If \(n=1\), then \((D_1 \cdot D_{y,1})=2\). If \(n=2\), then \((B_1' \cdot B_1') = -2\) implies \((D_1 \cdot D_2)=1\). If \(D_1 \in \mathcal{B}\), then \(([D_1] \cdot B_1') \in \{0,2\}\) gives a contradiction. So, since \(B_{y,1}+[D_1]\) and \(B_{y,1}+[D_2]\) are both in \(\langle B_1 \rangle^\perp\), \((B_{y,1} \cdot B_1')=2\) implies \((D_1 \cdot D_{y,1}) = (D_2 \cdot D_{y,1}) = 1\).

If \(n=3\), then \(([D_2] \cdot B_3) = ([D_3] \cdot B_3) = 1\) and \(([D_1] \cdot B_3)=0\). If \(D_1=D_{y,1}\), then \((B_1' \cdot B_{y,1})=2\) and \((B_1' \cdot B_1')=-2\) force \(D_3=D_2\). If conversely \(D_3=D_2\), then \((B_1' \cdot B_1')=-2\) implies \((D_1 \cdot D_2) =2\)--so that \(\langle B_1,[D_1+D_2] \rangle\) is totally isotropic; since \((B_1 \cdot B_2)=2\) and \(([D_1+D_2] \cdot B_2)=1\), it follows that \(B_1 = 2[D_1]+2[D_2]\) and \(D_1=D_{y,1}\). So \(D_1=D_{y,1}\) if and only if \(D_2=D_3\); but then \((1/2)B_1 \in \Pic(T)\) is a contradiction in this case. Thus \([D_2+D_3] \in \langle B_1,B_2 \rangle^\perp\) and \([D_2-D_3] \in \langle B_1,B_2,B_3 \rangle^\perp\) imply \((D_2 \cdot D_3)=0\); also, by similar reasoning, \((D_1 \cdot D_{y,1})=0\). Since \((1/2)B_1 \notin \Pic(T)\) and \(\Pic(T)\) cannot contain a totally isotropic sublattice of rank 2, none of \((D_1 \cdot D_2)\), \((D_1 \cdot D_3)\), \((D_{y,1} \cdot D_2)\), or \((D_{y,1} \cdot D_3)\) can equal 2. So \(([D_1] \cdot B_1')=0\) and \((B_{y,1} \cdot B_1')=2\) imply
\[(D_1 \cdot D_2)=(D_1 \cdot D_3)=(D_{y,1} \cdot D_2)=(D_{y,1} \cdot D_3)=1.\]

In all three cases for \(n\), \(B_1\) is realized as the class of a reduced, effective, and connected divisor \(E\) with the property that every effective divisor \(E'\) satisfying \(E' < E\) has \(h^0(E')=1\). Fix \(\{s,s'\} \subseteq H^0(B_1)\) such that \(s\) vanishes on all of \(E\) and \(s'\) does not. If \(s'\) vanishes on some non-trivial effective divisor \(E'\) satisfying \(E' < E\), then \(h^0(E-E')=1\) contradicts the fact that \(s'/s\) is not constant. So \(B_1\) has no fixed component, and Proposition 1 in \cite{May} shows that \(B_1\) is the class of an elliptic curve.
\end{proof}

\begin{proposition}\label{K3embed}
There is an embedding \(T \subseteq \mathbb{P}^1 \times \mathbb{P}^1 \times \mathbb{P}^1\). If \((k,l,m) \in \mathcal{N}\), then \(T\) is pure of type \((k,l,m)\).
\end{proposition}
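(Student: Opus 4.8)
The plan is to build an embedding of $T$ into $\mathbb{P}^1 \times \mathbb{P}^1 \times \mathbb{P}^1$ out of the three genus-one pencils furnished by Lemma \ref{PrimeBasis}. Since $B_1$, $B_2$, $B_3$ are nef, satisfy $(B_j \cdot B_j) = 0$, and are (by the proof of Lemma \ref{PrimeBasis}) classes of elliptic curves, each linear system $|B_j|$ is a base-point-free pencil defining an elliptic fibration $\phi_j \colon T \to \mathbb{P}^1$. Put $\phi := (\phi_1, \phi_2, \phi_3) \colon T \to \mathbb{P}^1 \times \mathbb{P}^1 \times \mathbb{P}^1$ and $L := B_1 + B_2 + B_3 = \phi^\ast \mathcal{O}(1,1,1)$. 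Then $L$ is nef and $(L \cdot L) = 2\bigl( (B_1 \cdot B_2) + (B_1 \cdot B_3) + (B_2 \cdot B_3) \bigr) = 12 > 0$, so $L$ is nef and big and $\phi$ is generically finite onto its image $Y$, which is then a prime divisor of some tri-degree $(a,b,c)$. A curve $\Gamma \subseteq T$ is contracted by $\phi$ precisely when $(L \cdot \Gamma) = 0$; since each $B_j$ is nef this forces $[\Gamma] \in \langle B_1, B_2, B_3 \rangle^\perp$, and then applying the Hodge index theorem to the pair $L, \Gamma$ forces $(\Gamma \cdot \Gamma) = -2$ — contradicting the observation (recorded before Lemma \ref{PrimeBasis}) that every class in $\langle B_1, B_2, B_3 \rangle^\perp$ has self-intersection divisible by $4$. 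Hence $\phi$ contracts no curve, so $\phi \colon T \to Y$ is a finite morphism.

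Next I would determine the degree $d$ of $\phi$ and the tri-degree of $Y$. For general $p = (p_1,p_2)$, the preimage $\phi^{-1}(C_{x,p})$ is the intersection of a general fibre of $\phi_2$ with a general fibre of $\phi_3$, hence consists of $(B_2 \cdot B_3) = 2$ reduced points; these map with degree $d$ onto the $a$ points of $Y \cap C_{x,p}$ (all lying off the branch locus of $\phi$, for general $p$), so $2 = da$, and likewise $2 = db = dc$. Thus $d \in \{1,2\}$ and $(a,b,c) = (2/d,2/d,2/d)$. The crucial point is to exclude $d = 2$. If $d = 2$, then on a general fibre $F$ of $\phi_1$ the two degree-two maps $\phi_2|_F$ and $\phi_3|_F$ would be the same pencil, i.e.\ $\mathcal{O}_F(B_2) \cong \mathcal{O}_F(B_3)$; equivalently $B_2 - B_3$ would restrict trivially to the generic fibre of $\phi_1$, i.e.\ it would be a $\mathbb{Z}$-linear combination of $B_1$ and of classes of $(-2)$-curves orthogonal to $B_1$. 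I would rule this out by a direct computation with the Gram matrix $M_{k,l,m}$: writing $B_2 - B_3$ in the pure basis as $E_y - E_z$ and pairing a hypothetical such expression with the elements of $\mathcal{B}$, one is forced into a half-integer coefficient. Hence $d = 1$, and $\phi \colon T \to Y$ is a birational morphism onto a surface $Y$ of tri-degree $(2,2,2)$.

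To promote this to an isomorphism — and thus to an honest embedding — I would invoke adjunction on $Y$. As a Cartier divisor of tri-degree $(2,2,2)$ in $\mathbb{P}^1 \times \mathbb{P}^1 \times \mathbb{P}^1$, the surface $Y$ is Gorenstein with $\omega_Y \cong \mathcal{O}(-2,-2,-2) \otimes \mathcal{O}(2,2,2)|_Y \cong \mathcal{O}_Y$. Since $T$ is smooth and $\phi$ is finite and birational, $\phi$ is the normalization of $Y$, and the conductor formula for the normalization of a Gorenstein surface gives $\omega_T \cong \phi^\ast \omega_Y \otimes \mathfrak{c}$, with $\mathfrak{c} \subseteq \mathcal{O}_T$ the conductor ideal. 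But $\omega_T \cong \mathcal{O}_T$ and $\omega_Y \cong \mathcal{O}_Y$, so $\mathfrak{c} = \mathcal{O}_T$ and $\phi$ is an isomorphism. Therefore $T$ is isomorphic to the smooth $(2,2,2)$-surface $Y \subseteq \mathbb{P}^1 \times \mathbb{P}^1 \times \mathbb{P}^1$, which is the desired embedding.

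It remains, for $(k,l,m) \in \mathcal{N}$, to verify that $Y$ (hence $T$) is pure of type $(k,l,m)$. The classes of the fibres of $Y$ over the three axes are $B_1$, $B_2$, $B_3$, so $\{B_1, B_2, B_3\} = \{E_x, E_y, E_z\}$. Each $B_{x,j}$ satisfies $(B_{x,j} \cdot E_x) = 1$ and $(B_{x,j} \cdot E_y) = (B_{x,j} \cdot E_z) = 0$, hence is the class of a section of $\phi_1$ on which $\phi_2$ and $\phi_3$ are constant, i.e.\ of a curve parallel to the $x$-axis contained in $Y$; similarly for the $B_{y,j}$ and $B_{z,j}$. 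As the classes of distinct such curves are distinct, $|\mathcal{B}_x(Y)| \ge k$, $|\mathcal{B}_y(Y)| \ge l$, $|\mathcal{B}_z(Y)| \ge m$, and one must show there are no others. For a curve $C$ parallel to the $x$-axis, $[C] - B_{x,1}$ lies in $\langle E_x,E_y,E_z\rangle^\perp$, which is negative definite with all norms divisible by $4$; using this together with effectivity (distinct curves parallel to one axis are disjoint, and a curve parallel to one axis meets a curve parallel to another in at most one point) and the explicit form of $L_{k,l,m}$ for the finitely many types in $\mathcal{N}$, one gets that the curves parallel to axes on $Y$ are exactly $\{B_{x,j}\} \cup \{B_{y,j}\} \cup \{B_{z,j}\}$. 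Then $\mathcal{B}(Y) = \mathcal{B}$ is a basis of $\Pic(Y)$ and condition (c) of Definition \ref{PureType} holds, so $Y$ — and hence $T$ — is pure of type $(k,l,m)$. I expect the exclusion of $d = 2$, and the matching verification that $Y$ has no unexpected curves parallel to an axis, to be the main obstacle: these are exactly the places where one must use the specific shape of $M_{k,l,m}$ rather than soft positivity arguments, the competing phenomenon to be excluded being a K3 that double covers $\mathbb{P}^1 \times \mathbb{P}^1$.
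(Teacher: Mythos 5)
Your proposal has the same skeleton as the paper's proof: use the three pencils \(|B_1|,|B_2|,|B_3|\) to map \(T\) to \(\mathbb{P}^1\times\mathbb{P}^1\times\mathbb{P}^1\), observe that nothing is contracted because every class in \(\langle B_1,B_2,B_3\rangle^\perp\) has self-intersection divisible by \(4\) (exactly the paper's argument), land on a \((2,2,2)\) divisor, and then rule out unexpected curves parallel to axes by intersection-theoretic case analysis in \(L_{k,l,m}\) (your purity step is the paper's determinant argument via Lemma \ref{PureDet}, in sketch form). Where you genuinely diverge is in upgrading the morphism to an embedding. The paper cites Mayer's Proposition 2 (the map given by a nef, big, fixed-component-free class on a K3 is either an embedding or a ramified double cover) and disposes of the double-cover case by citing \cite{Rei} (the image would be \(\mathbb{P}^2\) or a Hirzebruch surface) together with Lemma \ref{notK3}. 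You instead compute the degree directly (\(2=da=db=dc\)) and, in the birational case, use the conductor formula for the normalization of the Gorenstein hypersurface \(Y\). That conductor argument is correct and is an attractive self-contained replacement for the citation; you should just spell out that non-normality of the \(S_2\) hypersurface \(Y\) forces the conductor to be a nonzero effective divisor, and that normality of \(Y\) plus finite birationality of \(\phi\) gives an isomorphism by Zariski's main theorem.

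The one genuine gap is your exclusion of \(d=2\). The reduction is fine: if \(d=2\) then \(\mathcal{O}_F(B_2)\cong\mathcal{O}_F(B_3)\) on the general fibre \(F\) of \(\phi_1\), hence \(B_2-B_3=nB_1+E\) with \(E\) effective and supported on fibres of \(\phi_1\). But the contradiction you then assert --- ``pairing with the elements of \(\mathcal{B}\) forces a half-integer coefficient'' --- does not close as stated, because \(E\) decomposes into classes of fibre components, i.e.\ effective classes \(\gamma\) with \((\gamma\cdot\gamma)=-2\) and \((\gamma\cdot B_1)=0\), and these are not in general elements of \(\mathcal{B}\) (for instance \(E_x-B_{y,j}\), the class of the residual component of a fibre over the \(x\)-axis containing a curve parallel to the \(y\)-axis, is such a class whenever \(l>0\)). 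To finish you would need to classify, for each lattice \(L_{k,l,m}\) with \((k,l,m)\in\mathcal{N}\), all effective \((-2)\)-classes orthogonal to \(B_1\), and then check that no nonnegative combination of them plus an integer multiple of \(B_1\) can equal \(E_y-E_z\). This does appear to succeed (in the low-rank cases one finds that every such \(\gamma\) has zero \(E_y\)- and \(E_z\)-coefficients, so comparing \(E_y\)-coefficients gives \(1=0\)), and note that the double cover being excluded is a genuine geometric object --- a K3 double-covering a degree-six del Pezzo in \(\mathbb{P}^1\times\mathbb{P}^1\times\mathbb{P}^1\) --- so the exclusion must use the specific lattice. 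This case analysis is a real piece of work that your proposal does not acknowledge, and it is precisely what the paper's appeal to the classification of hyperelliptic linear systems on K3 surfaces is designed to avoid.
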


\begin{proof}
By Lemma \ref{PrimeBasis}, each \(B_j\) satisfies both \(h^0(B_j)=2\) and \((B_j \cdot B_j)=0\), and furthermore has no fixed component. Thus each \(B_j\) induces a morphism \(\psi_j:T \rightarrow \mathbb{P}^1\). Set
\[\psi := \psi_1 \times \psi_2 \times \psi_3:T \rightarrow \mathbb{P}^1 \times \mathbb{P}^1 \times \mathbb{P}^1\]
and \(A := B_1 + B_2 + B_3\), and let \(\phi\) denote the Segre embedding of \(\mathbb{P}^1 \times \mathbb{P}^1 \times \mathbb{P}^1\) into \(\mathbb{P}^7\). Then \(A = (\phi \circ \psi)^*\mathcal{O}(1)\). Since each \(B_j\) is nef and no prime divisor on \(T\) can have its class in \(\langle B_1,B_2,B_3 \rangle^\perp\), Nakai's criterion (e.g.,\cite{bhpv}) implies \(A\) is ample; also, \(A\) has no fixed component since no \(B_j\) does. So \((\phi \circ \psi)\) does not collapse any curve on \(T\) and Proposition 2 in \cite{May} shows that \((\phi \circ \psi)\) is either an embedding or a ramified double covering. Thus \(\psi(T)\) is a prime divisor on \(\mathbb{P}^1 \times \mathbb{P}^1 \times \mathbb{P}^1\). Since each \(B_j+B_{j' \neq j}\) is nef, big, and effective with no fixed component, Proposition 2 in \cite{May} shows also that each \(\psi_j \times \psi_{j'}\) is surjective. So, in particular, \(\psi(T)\) is not a product with one of the coordinate copies of \(\mathbb{P}^1\) as a factor. If \((\phi \circ \psi)\) is a ramified double covering, then the main result in \cite{Rei} shows that \(\psi(T)\) is either a copy of \(\mathbb{P}^2\) or a Hirzebruch surface--which contradicts Lemma \ref{notK3}. Thus \(\psi\) is an embedding.

For each \(B_{\omega,j'}\) and each \(B_j\) with \((B_j \cdot B_{\omega,j'})=0\), \(h^0(B_j)\) must contain a section whose zero locus is disjoint from \(D_{\omega,j'}\)--which means \(\psi_j(D_{\omega,j'})\) is point. Thus each \(\psi(D_{\omega,j'})\) is a curve parallel to an axis (specifically, the axis corresponding to the \(B_j\) which satisfies \((B_j \cdot B_{\omega,j'})=1\)) and \(\psi(T)\) is pure of type \((k,l,m)\) if it has no curves parallel to axes beyond those whose classes are contained in \(\mathcal{B}\).

Now consider the case \((k,l,m) \in \mathcal{N}\). Suppose \(\psi(T)\) contains some \(C_{x,p}\) with \([C_{x,p}] \notin \mathcal{B}\). By the construction of \(\psi\), \(([C_{x,p}] \cdot B_1) = 1\) and \([C_{x,p}]\) must have intersection zero with \(B_2\), \(B_3\), and every \(B_{x,j}\). If \([C_{x,p}]\) has intersection zero with every \(B_{y,j}\) and every \(B_{z,j}\), then the intersection form on \(\langle \mathcal{B} \cup \{[C_{x,p}]\} \rangle\) is given by \(M_{k+1,l,m}\); but then Lemma \ref{PureDet} shows \(\mathcal{B} \cup \{[C_{x,p}]\}\) is linearly independent, a contradiction. Writing \(p = (p_1,p_2)\) and \(p'=(p_1',p_2')\), every curve \(C_{y,p'}\) satisfies \(C_{y,p'} \cap C_{x,p} = \emptyset\) if \(p_1' \neq p_2\) and \(|C_{y,p'} \cap C_{x,p}| = 1\) with multiplicity 1 if \(p_1' = p_2\). Since \(E_{z=p_2}\) has bi-degree \((2,2)\), there are at most two \(D_{y,j}\) on \(T\) such that \((C_{x,p} \cdot D_{y,j})=1\). If \((C_{x,p} \cdot D_{y,j'})=1\) for some \(D_{y,j'}\), then \((C_{x,p} \cdot D_{y,j})\) is odd--and hence equal to 1--for every \(D_{y,j}\); so \(l \leq 2\) in this case. Similarly, \(m \leq 2\) and \((C_{x,p} \cdot D_{z,j})=1\) for every \(D_{z,j}\) if there is some \(D_{z,j'}\) such that \((C_{x,p} \cdot D_{z,j'})=1\). One can now compute \(\det(M) \neq 0\) for each matrix \(M\) which gives a possible intersection form on \(\langle \mathcal{B} \cup \{[C_{x,p}]\} \rangle\), a contradiction. It would similarly be a contradiction if \(T\) contained some curve \(C_{y,p}\) or \(C_{z,p}\) whose class was not in \(\mathcal{B}\).
\end{proof}

\begin{remark}\label{NoD2}
Proposition \ref{K3embed} shows that \(n=1\) is the only case that can actually occur in the latter part of the proof of Lemma \ref{PrimeBasis} when \((k,l,m) \in \mathcal{N}\): otherwise, \(\psi(D_2)\) would be a curve parallel to the \(z\)-axis such that \([D_2] \notin \mathcal{B}\) (since \((D_2 \cdot D_{y,1})=1\)).
\end{remark}

\subsection{Nef classes in pure Picard lattices}

Fix \((k,l,m) \in \mathcal{N}\), set
\[\Gamma := \{\gamma \in L_{k,l,m} | (\gamma \cdot \gamma)_{L_{k,l,m}} = -2\},\]
and write \(\Gamma = \Gamma^+ \cup \Gamma^-\) such that \(\Gamma^+ \cap \Gamma^- = \emptyset\), \(\Gamma^- = \{\gamma|-\gamma \in \Gamma^+\}\), and
\[\Gamma \cap \{\gamma + \gamma'| \{\gamma,\gamma'\} \subset \Gamma^+\} \subseteq \Gamma^+;\]
we will call a choice of \(\Gamma^+\) satisfying these conditions ``allowable''. Let \(\mathcal{B}\) as above be a basis for \(L_{k,l,m}\) in which \(M_{k,l,m}\) gives the intersection form. We will show that \(\Gamma^+\) can be chosen so that each \(B_j\) satisfies \((B_j \cdot \gamma) \geq 0\) for every \(\gamma \in \Gamma^+\). Thus any effective isometry between \(L_{k,l,m}\) and the Picard lattice of a K3 surface--that is, any isometry which sends each \(\gamma \in \Gamma^+\) to an effective class--will send each \(B_j\) to a nef class.

Set
\[\tilde{Q}(x_0,x_1,y_0,y_1,z_0,z_1) := (x_0^2+x_1^1)(y_0^2+y_1^2)(z_0^2+z_1^2) + 3x_0x_1y_0y_1z_0z_1 -2x_1^2y_0y_1z_0z_1,\]
and set \(\tilde{S} := \{\tilde{Q}=0\} \subseteq \mathbb{P}^1 \times \mathbb{P}^1 \times \mathbb{P}^1\). One can check by directly testing possible factors that
\[\tilde{Q}(0,1,y_0,y_1,z_0,z_1)=y_0^2z_0^2+y_0^2z_1^2+y_1^2z_0^2-2y_0y_1z_0z_1+y_1^2z_1^2\]
is irreducible over \(\mathbb{C}\). So, since it has no factor of tri-degree \((1,0,0)\), \(\tilde{Q}\) is irreducible over \(\mathbb{C}\). It will also follow from Lemma \ref{SpecialK3} that \(\tilde{Q}\) is irreducible, since the existence of non-constant \(Q_1\) and \(Q_2\) satisfying \(Q_1 \cdot Q_2 = \tilde{Q}\) would imply \(\{Q_1 = Q_2 = 0\} \neq \emptyset\).

\begin{lemma}\label{SpecialK3}
The set
\[\mathrm{Sing}(\tilde{Q}) := \left\{ \tilde{Q} = \frac{\partial \tilde{Q}}{\partial x_0} = \frac{\partial \tilde{Q}}{\partial x_1} = \frac{\partial \tilde{Q}}{\partial y_0} = \frac{\partial \tilde{Q}}{\partial y_1} = \frac{\partial \tilde{Q}}{\partial z_0} = \frac{\partial \tilde{Q}}{\partial z_1} = 0 \right\} \subseteq \mathbb{P}^1 \times \mathbb{P}^1 \times \mathbb{P}^1\]
is empty.
\end{lemma}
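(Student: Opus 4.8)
Introduce the shorthand \(u=x_0^2+x_1^2\), \(v=y_0^2+y_1^2\), \(w=z_0^2+z_1^2\), \(b=y_0y_1\), \(c=z_0z_1\), and \(\mu=3x_0x_1-2x_1^2=x_1(3x_0-2x_1)\), so that \(\tilde Q=uvw+\mu bc\) and
\[\frac{\partial\tilde Q}{\partial x_0}=2x_0vw+3x_1bc,\qquad\frac{\partial\tilde Q}{\partial x_1}=2x_1vw+(3x_0-4x_1)bc,\]
\[\frac{\partial\tilde Q}{\partial y_0}=2y_0uw+\mu c\,y_1,\qquad\frac{\partial\tilde Q}{\partial y_1}=2y_1uw+\mu c\,y_0,\]
with \(\partial\tilde Q/\partial z_0,\partial\tilde Q/\partial z_1\) obtained from the \(y\)-formulas by \((v,b)\leftrightarrow(w,c)\). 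Since \(\tilde Q\) is tri-homogeneous of tri-degree \((2,2,2)\), Euler's identity in each pair of variables shows that any common zero of the six partials automatically satisfies \(\tilde Q=0\); so it suffices to show the six partials have no common zero in \(\mathbb{P}^1\times\mathbb{P}^1\times\mathbb{P}^1\). I would argue by contradiction, assuming \(P=(x,y,z)\) is such a point.

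The first step is to clear away degeneracies. If \(b=0\), then using the \(y_0\leftrightarrow y_1\) symmetry of \(\tilde Q\) one may assume \(y_1=0\), \(y_0\neq0\), \(v=y_0^2\neq0\); vanishing of \(\partial_{y_0}\tilde Q,\partial_{z_0}\tilde Q,\partial_{z_1}\tilde Q\) then forces \(uw=0\) and \(uv=0\), hence \(u=0\), so \(x_1\neq0\) and \(\mu\neq0\); now \(\partial_{y_1}\tilde Q=\mu c\,y_0=0\) gives \(c=0\) (hence \(w\neq0\)), and \(\partial_{x_1}\tilde Q=2x_1vw\neq0\) is a contradiction. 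By symmetry \(b,c\neq0\). Then the \(y\)- and \(z\)-pairs of partials are homogeneous \(2\times2\) systems with nonzero solution vectors, so \(4u^2w^2=\mu^2c^2\) and \(4u^2v^2=\mu^2b^2\). If \(u=0\) these give \(\mu=0\), forcing \(3x_0=2x_1\) (since \(x_1=0\) would make \(u=x_0^2=0\)), whence \(u=\tfrac{13}{9}x_1^2\neq0\), absurd; so \(u\neq0\). If \(v=0\) then \(\mu=0\) and one of \(\partial_{x_0}\tilde Q,\partial_{x_1}\tilde Q\) reduces to a nonzero multiple of \(bc\), absurd; so \(v\neq0\), and likewise \(w\neq0\). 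Finally \(\partial_{y_0}\tilde Q=0\) reads \(2y_0uw=-\mu c\,y_1\) with \(y_0,u,w\neq0\), so \(\mu\neq0\), hence \(x_1\neq0\).

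In the remaining generic situation \(u,v,w,b,c,\mu\) are all nonzero. The coefficient matrix \(\left(\begin{smallmatrix}2uw&\mu c\\\mu c&2uw\end{smallmatrix}\right)\) of the \(y\)-system is symmetric with nonzero off-diagonal entry, so its kernel is one-dimensional, spanned by \((1,1)\) or \((1,-1)\); thus \(y_0=\pm y_1\) and \(b=y_0y_1=\pm v/2\). Likewise \(c=\pm w/2\), so \(bc=\sigma vw/4\) for some \(\sigma\in\{1,-1\}\). Substituting into \(\partial_{x_0}\tilde Q=0\) and \(\partial_{x_1}\tilde Q=0\) and cancelling \(vw\neq0\) gives \(8x_0+3\sigma x_1=0\) and \(8x_1+\sigma(3x_0-4x_1)=0\); eliminating \(x_0\) yields \((\tfrac{55}{8}-4\sigma)x_1=0\), and since \(\tfrac{55}{8}-4\sigma\) equals \(\tfrac{23}{8}\) or \(\tfrac{87}{8}\), never \(0\), we get \(x_1=0\), contradicting \(x_1\neq0\). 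Hence no common zero exists and \(\mathrm{Sing}(\tilde Q)=\emptyset\).

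I expect the only real work to be the degeneracy analysis of the second paragraph — eliminating the strata where \(b\), \(c\), \(u\), \(v\), \(w\), or \(\mu\) vanishes — and the main pitfall to be the \(x_0\leftrightarrow x_1\) asymmetry of \(\tilde Q\) coming from the term \(-2x_1^2bc\): unlike the \(y\)- and \(z\)-coordinates, one cannot symmetrize in the \(x\)-coordinates, so each \(x\)-degeneracy must be checked by hand. Once inside the generic stratum the concluding linear-algebra computation is short and decisive.
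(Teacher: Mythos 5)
Your proof is correct and takes essentially the same route as the paper's: a direct case analysis eliminating the strata where $y_0y_1z_0z_1$, $y_0^2+y_1^2$, $z_0^2+z_1^2$, or $x_0^2+x_1^2$ vanishes, followed by the generic case $y_0=\pm y_1$, $z_0=\pm z_1$, which forces an inconsistent linear system $8x_0\pm 3x_1 = 3x_0+(8\mp 4)x_1 = 0$ in the $x$-coordinates. Your write-up is in fact somewhat more explicit than the paper's terse version, but the content is the same.
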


\begin{proof}
Suppose \(([x_0:x_1],[y_0:y_1],[z_0:z_1]) \in \mathrm{Sing}(\tilde{Q})\).

If \(y_0y_1z_0z_1=0\), then
\[(x_0^2+x_1^2)(y_0^2+y_1^2)=(x_0^2+x_1^2)(z_0^2+z_1^2)=(y_0^2+y_1^2)(z_0^2+z_1^2)=0\]
implies that exactly one of \(y_0y_1=0\) or \(z_0z_1=0\) is true--so that also \(x_0^2+x_1^2=0\) and \(x_0x_1 \neq 0\); but then \(3x_0-2x_1=0\) gives a contradiction. 

From \(y_0y_1z_0z_1 \neq 0\), it follows that \((y_0^2 + y_1^2)(z_0^2 + z_1^2) \neq 0\). Also, if \(x_0^2+x_1^2=0\), then \(3x_0-2x_1=0\) again gives a contradiction. Thus
\[y_0^2-y_1^2 = z_0^2 - z_1^2 =0\]
implies
\[8x_0 \pm 3x_1 = 3x_0 + (8 \mp 4)x_1 = 0,\]
a contradiction which leaves open no further possibilities.
\end{proof}

Lemma \ref{SpecialK3} shows that \(\tilde{S}\) is a K3 surface; it is a variant of a K3 surface studied in \cite{McM} and \cite{Row}. The set of all curves parallel to axes contained in \(\tilde{S}\) is
\[\{C_1,\dots,C_{24}\} := \{C_{z,(i,0)},C_{z,(i,\infty)},C_{y,(0,i)},C_{y,(\infty,i)},C_{z,(2/3,i)},C_{z,(\infty,i)},C_{x,(i,0)},C_{x,(i,\infty)},\]
\[C_{y,(i,2/3)},C_{y,(i,\infty)},C_{x,(0,i)},C_{x,(\infty,i)},C_{z,(-i,0)},C_{z,(-i,\infty)},C_{y,(0,-i)},C_{y,(\infty,-i)},\]
\[C_{z,(2/3,-i)},C_{z,(\infty,-i)},C_{x,(-i,0)},C_{x,(-i,\infty)},C_{y,(-i,2/3)},C_{y,(-i,\infty)},C_{x,(0,-i)},C_{x,(\infty,-i)}\}.\]
Clearly, \(\tilde{S}\) is not pure. For example,
\[[C_{24}] = 2E_y+2E_z-2E_x-[C_7]-[C_8]-[C_{11}]-[C_{12}]-[C_{19}]-[C_{20}]-[C_{23}]\]
and
\[[C_{22}] = [C_{11}]+[C_{12}]-[C_{21}]+2E_x-2E_y-E_z+[C_7]+[C_8]+[C_{19}]+[C_{20}]\]
\[= -[C_{21}]+2E_x-2E_y-[C_9]-[C_{10}]+[C_7]+[C_8]+[C_{19}]+[C_{20}].\]

Set
\[\Gamma^+(\tilde{S}) := \{\mathcal{L} \in \Pic(\tilde{S})|(\mathcal{L} \cdot \mathcal{L})=-2 \text{ and } \mathcal{L} \text{ is effective}\}.\]
So
\[\Gamma^+(\tilde{S}) \cap \{\mathcal{L} + \mathcal{L}'|\{\mathcal{L},\mathcal{L}'\} \subseteq \Gamma^+(\tilde{S})\} \subseteq \Gamma^+(\tilde{S}),\]
and every \(\mathcal{L} \in \Pic(\tilde{S})\) satisfying \((\mathcal{L} \cdot \mathcal{L})=-2\) also satisfies \(|\{\mathcal{L},-\mathcal{L}\} \cap \Gamma^+(\tilde{S})|=1\).

\begin{proposition}\label{NefEmbed}
There is a lattice embedding \(L_{k,l,m} \leq \Pic(\tilde{S})\) such that \(\{B_1,B_2,B_3\} = \{E_x,E_y,E_z\}\). Thus setting \(\Gamma^+ := \Gamma \cap \Gamma^+(S)\) is an allowable choice that gives \((B_j \cdot \gamma) \geq 0\) for each \(B_j\) and every \(\gamma \in \Gamma^+\).
\end{proposition}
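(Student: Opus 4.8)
The plan is to construct the claimed embedding by hand, realizing \(L_{k,l,m}\) as the sublattice of \(\Pic(\tilde{S})\) spanned by \(E_x,E_y,E_z\) and a suitable subcollection of the curves \(C_1,\dots,C_{24}\) parallel to axes, and then to read off the two consequences in the last sentence from standard facts about K3 surfaces.

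The first step is to record the intersection numbers that make this work. Each \(C_i\) is a smooth rational curve, so \((C_i\cdot C_i)=-2\); the projection of \(\tilde{S}\) onto the \(x\)-axis is a surjective morphism whose fibers represent \(E_x\), a curve parallel to the \(x\)-axis is a section of this projection and is disjoint from the fibers \(\{y=p\}\) and \(\{z=p\}\) for all but one \(p\), so it pairs to \(1\) with \(E_x\) and to \(0\) with \(E_y\) and \(E_z\) (and symmetrically for the other axes); two distinct curves parallel to the same axis are disjoint already in \(\mathbb{P}^1\times\mathbb{P}^1\times\mathbb{P}^1\); and a curve parallel to the \(x\)-axis is disjoint from one parallel to the \(y\)-axis whenever their prescribed \(z\)-coordinates differ, with the symmetric statements for the other pairs of axes. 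It follows that if \(\mathcal{C}\) is a set of pairwise disjoint curves parallel to axes -- \(k\) of them parallel to the \(x\)-axis, \(l\) parallel to the \(y\)-axis, and \(m\) parallel to the \(z\)-axis -- then the Gram matrix of \(\{E_x,E_y,E_z\}\cup\mathcal{C}\) in the order dictated by Definition \ref{PureType} is exactly \(M_{k,l,m}\); for \((k,l,m)\in\mathcal{N}\), Lemma \ref{PureDet} gives \(\det M_{k,l,m}\neq 0\), so these \(3+k+l+m\) classes are linearly independent and span a sublattice of \(\Pic(\tilde{S})\) isometric to \(L_{k,l,m}\) via an isometry taking \(\{E_x,E_y,E_z\}\) onto \(\{B_1,B_2,B_3\}\). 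Because every element of \(\mathcal{N}\) lies below a permutation of one of \((6,0,0)\), \((5,1,1)\), \((4,2,2)\), \((3,3,3)\), and discarding curves from such a \(\mathcal{C}\) yields a valid collection for the correspondingly smaller triple, it suffices to produce such a \(\mathcal{C}\) for these four triples and their permutations.

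The second step is to exhibit these collections, using the coordinates of the \(C_i\) read off from the list just before the proposition. For a permutation of \((6,0,0)\) one takes six of the eight curves parallel to the relevant axis, which are automatically pairwise disjoint. For a triple all of whose entries are at most \(4\) -- which covers the permutations of \((4,2,2)\), \((2,2,4)\), and \((3,3,3)\) -- one uses the four \(x\)-parallel curves with \(z\)-coordinate in \(\{i,-i\}\), the four \(y\)-parallel curves with \(z\)-coordinate in \(\{0,\infty\}\), and the four \(z\)-parallel curves with \(x\)-coordinate in \(\{2/3,\infty\}\); for each pair of axes these curves occupy disjoint sets of values in the coordinate they have in common (\(\{i,-i\}\) against \(\{0,\infty\}\) for the common \(z\)-coordinate, \(\{0,\infty\}\) against \(\{i,-i\}\) for the common \(y\)-coordinate, and \(\{i,-i\}\) against \(\{2/3,\infty\}\) for the common \(x\)-coordinate), so all twelve are pairwise disjoint and one takes from each group as many as the triple requires. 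For a permutation of \((5,1,1)\) slightly more care is needed; a direct check gives, for instance, \(\{C_{11},C_{12},C_{20},C_{23},C_{24}\}\cup\{C_3\}\cup\{C_5\}\) for \((5,1,1)\) itself, and a similar choice together with the \(y\leftrightarrow z\) symmetry of \(\tilde{Q}\) handles \((1,5,1)\) and \((1,1,5)\). This combinatorial case analysis is the step I expect to be the main obstacle: it is elementary, but it requires tracking all of the incidences among the twenty-four curves.

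Finally, the two assertions of the last sentence follow formally. Fixing \((k,l,m)\in\mathcal{N}\) and the embedding just produced, view \(\Gamma\) as a set of \((-2)\)-classes in \(\Pic(\tilde{S})\). Since \(\tilde{S}\) is a K3 surface, exactly one of \(\mathcal{L}\) and \(-\mathcal{L}\) lies in \(\Gamma^+(\tilde{S})\) for each class \(\mathcal{L}\) with \((\mathcal{L}\cdot\mathcal{L})=-2\), and a sum of two members of \(\Gamma^+(\tilde{S})\) that again has self-intersection \(-2\) also lies in \(\Gamma^+(\tilde{S})\) -- these are precisely the properties of \(\Gamma^+(\tilde{S})\) noted before the proposition -- and together they show at once that \(\Gamma^+:=\Gamma\cap\Gamma^+(\tilde{S})\), with its complement \(\Gamma^-\) in \(\Gamma\), satisfies the three conditions defining an allowable choice. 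Moreover each \(E_\omega\) is the pullback of \(\mathcal{O}_{\mathbb{P}^1}(1)\) under a projection \(\tilde{S}\to\mathbb{P}^1\), hence globally generated and in particular nef, so \((E_\omega\cdot\gamma)\geq 0\) for every effective class \(\gamma\), and in particular for every \(\gamma\in\Gamma^+\subseteq\Gamma^+(\tilde{S})\). This completes the plan.
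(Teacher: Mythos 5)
Your proposal is correct and takes essentially the same route as the paper: both realize \(L_{k,l,m}\) inside \(\Pic(\tilde{S})\) by exhibiting an explicit pairwise-disjoint collection of curves from \(\{C_1,\dots,C_{24}\}\) together with \(E_x,E_y,E_z\), reduce to the maximal triples \((6,0,0)\), \((5,1,1)\), \((4,2,2)\), \((3,3,3)\), and then deduce allowability and nefness exactly as you do (the paper simply lists its four generating sets without spelling out the incidence check, which your write-up makes explicit). One small slip: the \(y\leftrightarrow z\) symmetry of \(\tilde{Q}\) fixes the \(x\)-slot and so takes a \((5,1,1)\) configuration to another \((5,1,1)\) configuration, not to \((1,5,1)\) or \((1,1,5)\); this is immaterial, since the statement only requires \(\{B_1,B_2,B_3\}=\{E_x,E_y,E_z\}\) as unordered sets (so reorderings of the triple are absorbed by permuting which \(B_j\) is sent to which \(E_\omega\)), and in any case a direct choice of five disjoint \(y\)- or \(z\)-parallel curves plus two more works just as your \((5,1,1)\) example does.
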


\begin{proof}
Since \((k,l,m) \in \mathcal{N}\), at least one of the lattice embeddings \(L_{k,l,m} \leq L_{6,0,0}\), \(L_{k,l,m} \leq L_{5,1,1}\), \(L_{k,l,m} \leq L_{4,2,2}\), or \(L_{k,l,m} \leq L_{3,3,3}\) exists; \(L_{6,0,0}\) is isometric to
\[\langle E_x,E_y,E_z,[C_7],[C_8],[C_{11}],[C_{12}],[C_{19}],[C_{20}] \rangle,\]
\(L_{5,1,1}\) is isometric to
\[\langle E_x,E_y,E_z,[C_2],[C_7],[C_8],[C_{11}],[C_{19}],[C_{20}],[C_{21}] \rangle,\]
\(L_{4,2,2}\) is isometric to
\[\langle E_x,E_y,E_z,[C_1],[C_2],[C_7],[C_8],[C_9],[C_{10}],[C_{19}],[C_{20}] \rangle,\]
and \(L_{3,3,3}\) is isometric to
\[\langle E_x,E_y,E_z,[C_1],[C_2],[C_7],[C_8],[C_9],[C_{10}],[C_{13}],[C_{19}],[C_{21}] \rangle.\]
Since \(E_x\), \(E_y\), and \(E_z\) are all nef, each \(B_j\) satisfies \((B_j \cdot \gamma) \geq 0\) for every \(\gamma \in \Gamma^+\).
\end{proof}

\subsection{Primitive embeddings of pure Picard lattices}

Let \(L_2\) be the lattice of rank 2 given by the matrix
\[M_2 := \left( \begin{array}{cc}
0 & 1 \\
1 & 0
\end{array} \right),\]
let \(L_8\) be the lattice of rank 8 given by the matrix
\[M_8 := \left( \begin{array}{cccccccc}
-2 & 0 & 0 & 1 & 0 & 0 & 0 & 0 \\
0 & -2 & 1 & 0 & 0 & 0 & 0 & 0 \\
0 & 1 & -2 & 1 & 0 & 0 & 0 & 0 \\
1 & 0 & 1 & -2 & 1 & 0 & 0 & 0 \\
0 & 0 & 0 & 1 & -2 & 1 & 0 & 0 \\
0 & 0 & 0 & 0 & 1 & -2 & 1 & 0 \\
0 & 0 & 0 & 0 & 0 & 1 & -2 & 1 \\
0 & 0 & 0 & 0 & 0 & 0 & 1 & -2
\end{array} \right),\]
and set \(L_{K3} := (L_2)^{\oplus 3} \oplus (L_8)^{\oplus 2}\); so \(L_{K3}\) has rank 22, is even in the sense that every element of \(L_{K3}\) has even self-intersection, and is unimodular in the sense that \(M_{K3} := (M_2)^{\oplus 3} \oplus (M_8)^{\oplus 2}\) is invertible over \(\mathbb{Z}\). For any complex K3 surface \(T\), it is a well-known fact (e.g.,\cite{May},\cite{bhpv},\cite{McM}) that the cup product makes \(H^2(T,\mathbb{Z})\) into a lattice isometric to \(L_{K3}\). A lattice embedding \(L \leq L'\) is said to be primitive if \((L^\perp)^\perp = L\) (where the orthogonal lattices are taken in \(L'\)) or, equivalently, if \((L \otimes \mathbb{Q}) \cap L' = L\). For example, by the Lefschetz theorem on (1,1) classes (e.g.,\cite{bhpv}), \(\Pic(T) \leq H^2(T,\mathbb{Z})\) is a primitive lattice embedding for every complex K3 surface \(T\).

For \((k,l,m) \in \mathcal{N}\), we have established that \(L_{k,l,m}\) can be assigned a nef cone which contains every \(B_j\), and furthermore that any effective isometry between \(L_{k,l,m}\) and the Picard lattice of a K3 surface then forces the K3 surface to be pure of type \((k,l,m)\). To prove the existence of pure K3 surfaces of type \((k,l,m)\), it remains to be shown only that \(L_{k,l,m}\) embeds primitively in \(L_{K3}\).

\begin{proposition}\label{K3exist}
If \((k,l,m) \neq (3,3,3)\), then there is a primitive lattice embedding \(L_{k,l,m} \leq L_{K3}\).
\end{proposition}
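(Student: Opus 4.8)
The plan is to reduce to a short list of ``maximal'' lattices and then invoke Nikulin's sufficient criterion for primitive embeddings into an even unimodular lattice; recall that $L_{K3}$ is even, unimodular, of signature $(3,19)$ and rank $22$.

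First I would set up the reduction. Since $L_{k',l',m'}$ is isometric to $L_{k,l,m}$ whenever the triples are reorderings of one another, and since a composite of primitive embeddings is again primitive, it is enough to produce a primitive embedding $L_\nu\le L_{K3}$ for each $\nu$ in the list $(6,0,0)$, $(5,1,1)$, $(4,2,2)$, $(3,3,2)$. Indeed, every $(k,l,m)\in\mathcal{N}\setminus\{(3,3,3)\}$ has a reordering lying below one of these: for triples below a reordering of $(6,0,0)$, $(5,1,1)$, or $(4,2,2)$ this is built into the definition of $\mathcal{N}$, while a triple $\le(3,3,3)$ other than $(3,3,3)$ has a coordinate at most $2$ and so, after a permutation, lies below $(3,3,2)$. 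Whenever $(k',l',m')\le\nu$, the matrix $M_{k',l',m'}$ is exactly the principal submatrix of $M_\nu$ indexed by $B_1,B_2,B_3$ and the first $k'$, $l'$, $m'$ of the vectors $B_{x,j}$, $B_{y,j}$, $B_{z,j}$ — the entries $B_{\omega,j}^2=-2$, $B_{\omega,j}\cdot B_\omega=1$, $B_{\omega,j}\cdot B_{\omega'}=0$, and $B_{\omega,j}\cdot B_{\omega'',j''}=0$ do not depend on $\nu$ — so $L_{k',l',m'}$ is the sublattice of $L_\nu$ spanned by part of the basis $\mathcal{B}$. The quotient is then free, hence the inclusion $L_{k',l',m'}\le L_\nu$ is primitive, and composing gives a primitive $L_{k,l,m}\cong L_{k',l',m'}\le L_\nu\le L_{K3}$.

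Next I would verify Nikulin's hypotheses for each $\nu$ in the list. Put $r=\mathrm{rank}(L_\nu)=3+k+l+m$, which is at most $11$ for all four choices. By Lemma~\ref{PureDet} the form on $L_\nu$ is nondegenerate, and by Proposition~\ref{NefEmbed} $L_\nu$ is isometric to a sublattice of $\Pic(\tilde S)$, so the Hodge index theorem bounds its positive index by $1$; it is exactly $1$ because $L_\nu$ contains $\langle B_1,B_2,B_3\rangle$ with form $M_{0,0,0}$ of signature $(1,2)$. Thus $L_\nu$ has signature $(1,r-1)$ with $1\le 3$ and $r-1\le 10\le 19$, so the signature conditions of Nikulin's criterion for primitive embeddings into even unimodular lattices (Nikulin, Corollary~1.12.3) hold; it remains to check that $\ell(A_{L_\nu})\le 22-r$, where $A_{L_\nu}=L_\nu^*/L_\nu$ is the discriminant group and $\ell(\cdot)$ its minimal number of generators. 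Since $A_{L_\nu}$ is a quotient of $\mathbb{Z}^r$ we have $\ell(A_{L_\nu})\le r\le 11\le 22-r$, so the criterion applies and the desired primitive embedding exists.

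I expect the main obstacle is not in the argument above — which is short once the reduction is in place — but in the excluded case $(3,3,3)$, which is precisely why rank $\le 11$ matters: for $r=12$ the trivial bound $\ell(A_S)\le\mathrm{rank}(S)$ no longer forces $\ell(A_S)\le 22-r=10$, so one would have to compute the discriminant group of $L_{3,3,3}$ outright (by Lemma~\ref{PureDet} it has order $|\det M_{3,3,3}|=704=2^{6}\cdot 11$, so everything hinges on its $2$-part). Since the statement excludes $(3,3,3)$, the substance of the proof is really just the reduction to finitely many rank-$\le 11$ lattices, after which Nikulin's criterion is automatic.
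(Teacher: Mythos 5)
Your reduction to the four lattices \(L_{6,0,0}\), \(L_{5,1,1}\), \(L_{4,2,2}\), \(L_{3,3,2}\) is exactly the paper's first step (a sublattice spanned by part of a basis has free quotient, hence is primitively embedded, and primitivity is preserved under composition). Where you diverge is the main step: the paper does \emph{not} use Nikulin's embedding theory at all, but instead writes down, for each of the four lattices, an explicit set of vectors in a standard basis \(\{\beta_1,\dots,\beta_{22}\}\) of \(L_{K3}\) realizing the intersection matrix, and checks primitivity by observing that these vectors extend to a \(\mathbb{Z}\)-basis of \(L_{K3}\) (the relevant change-of-basis matrix is invertible over \(\mathbb{Z}\)). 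Your route is legitimate and arguably cleaner---it avoids the bookkeeping of finding the vectors and makes transparent why rank \(\leq 11\) is the operative bound---while the paper's construction is self-contained, constructive, and verifiable by a finite integer computation without citing the classification of discriminant forms.

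One point needs care. For the two rank-\(11\) lattices the trivial bound \(\ell(A_{L_\nu})\leq r=11=22-r\) is exactly borderline, and the precise inequalities in Nikulin's Corollary 1.12.3 (and the conditions at the prime \(2\) in the underlying existence theorem for lattices with prescribed discriminant form) matter at the boundary. You should either invoke the version of the criterion that requires only \(n_+<3\), \(n_-<19\), and \(\mathrm{rank}\leq\frac{1}{2}\cdot 22=11\) (Nikulin's Theorem 1.12.4; the strict signature inequalities do hold here since the signature is \((1,r-1)\) with \(r-1\leq 10\)), or else replace the trivial bound by the actual values: Lemma \ref{PureDet} gives \(|\det M_{4,2,2}|=512=2^9\) and \(|\det M_{3,3,2}|=576=2^6\cdot 3^2\), so \(\ell(A)\leq 9\) and \(\ell(A)\leq 6\) respectively, which is comfortably within any version of the criterion. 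Finally, your closing remark cuts deeper than you suggest: since Propositions \ref{NefEmbed} and \ref{K3embed} already cover \((3,3,3)\in\mathcal{N}\), a primitive embedding \(L_{3,3,3}\leq L_{K3}\) is the \emph{only} missing ingredient for the existence of pure K3 surfaces of type \((3,3,3)\), a question the paper explicitly leaves open; so if the discriminant computation you sketch (\(|A|=2^6\cdot 11\), hence \(\ell(A)\leq 6\leq 10\)) really does satisfy the hypotheses of the finer embedding criterion, it would settle that question---all the more reason to be precise about the exact statement being cited.
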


\begin{proof}
Since the natural embedding of \(L_{k,l,m}\) into one of \(L_{6,0,0}\), \(L_{5,1,1}\), \(L_{4,2,2}\), or \(L_{3,3,2}\) has a basis which is a subset of a basis for the larger lattice, it must be primitive. So \(L_{k,l,m}\) has a primitive embedding in \(L_{K3}\) if \(L_{6,0,0}\), \(L_{5,1,1}\), \(L_{4,2,2}\), and \(L_{3,3,2}\) do.

Let \(\{\beta_1,\dots,\beta_{22}\}\) be a basis for \(L_{K3}\) in which \(M_{K3}\) gives \((\_ \cdot \_)_{L_{K3}}\). Set
\[\mathcal{B}_{6,0,0} =\]
\[\{\beta_1 + 2\beta_2 + \beta_4 + \beta_6 + \beta_{10} + \beta_{18},\beta_3 + \beta_2 + \beta_6,\beta_5 + \beta_2 + \beta_4,\]
\[\beta_7,\beta_9,\beta_{11},\beta_{15},\beta_{17},\beta_{19}\},\]
\[\mathcal{B}_{5,1,1} =\]
\[\{\beta_1 + 2\beta_2 + \beta_4 + \beta_6 + \beta_{10} + \beta_{18},\beta_3 + \beta_4 + \beta_2 + \beta_6 + \beta_{13},\beta_5 + \beta_6 + \beta_2 + \beta_4 + \beta_{21},\]
\[\beta_7,\beta_9,\beta_{11},\beta_{14},\beta_{15},\beta_{17},\beta_{22}\},\]
\[\mathcal{B}_{4,2,2} =\]
\[\{\beta_1 + 2\beta_2 + \beta_4 + \beta_6 + \beta_{10} + \beta_{18},\beta_3 + \beta_4 + \beta_2 + \beta_6 + \beta_{13},\beta_5 + \beta_6 + \beta_2 + \beta_4 + \beta_{21},\]
\[\beta_7,\beta_9,\beta_{12},\beta_{14},\beta_{15},\beta_{17},\beta_{20},\beta_{22}\},\]
and
\[\mathcal{B}_{3,3,2} =\]
\[\{\beta_1 + \beta_2 + \beta_4 + \beta_6 + \beta_{10},\beta_3 + \beta_4 + \beta_2 + \beta_6 + \beta_{18},\beta_5 + 2\beta_6 + \beta_2 + \beta_4 + \beta_{13} + \beta_{21},\]
\[\beta_7,\beta_9,\beta_{11},\beta_{14},\beta_{15},\beta_{17},\beta_{19},\beta_{22}\},\]
Since the matrices which send \(\{\beta_1,\beta_3,\beta_5\}\) to the first three entries of \(\mathcal{B}_{6,0,0}\), \(\mathcal{B}_{5,1,1}\), \(\mathcal{B}_{4,2,2}\), and \(\mathcal{B}_{3,3,2}\) and fix the remaining \(\beta_j\) are all invertible over \(\mathbb{Z}\), \(\mathcal{B}_{6,0,0}\), \(\mathcal{B}_{5,1,1}\), \(\mathcal{B}_{4,2,2}\), and \(\mathcal{B}_{3,3,2}\) are all subsets of bases for \(L_{K3}\); so they generate primitive embeddings of \(L_{6,0,0}\), \(L_{5,1,1}\), \(L_{4,2,2}\), and \(L_{3,3,2}\) in \(L_{K3}\).
\end{proof}

\subsection{Contradictions in pure Picard lattices of high rank}

Fix an ordered triple \((k,l,m)\) of non-negative integers such that \((k,l,m) \notin \mathcal{N}\). Up to reordering, one of \((k,l,m) \geq (7,0,0)\), \((k,l,m) \geq (6,1,0)\), \((k,l,m) \geq (5,2,0)\), or \((k,l,m) \geq (4,3,0)\) is true. Taking \(\tilde{S} \subseteq \mathbb{P}^1 \times \mathbb{P}^1 \times \mathbb{P}^1\) as above, we will use the arrangement of the curves parallel to axes in \(\tilde{S}\) to show that there is no pure K3 surface whose Picard lattice is isometric to \(L_{k,l,m}\). 

\begin{proposition}\label{LargePureExtraC}
There is no pure K3 surface of type \((k,l,m)\) in \(\mathbb{P}^1 \times \mathbb{P}^1 \times \mathbb{P}^1\).
\end{proposition}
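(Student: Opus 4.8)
The plan is to assume, toward a contradiction, that $S$ is a pure K3 surface of type $(k,l,m)$ with $(k,l,m)\notin\mathcal{N}$, and to exploit the reduction just recorded: after relabeling the axes we may assume $(k,l,m)$ dominates one of $(7,0,0)$, $(6,1,0)$, $(5,2,0)$, or $(4,3,0)$, so in particular $k\ge 4$ and (with the axes sorted) $S$ carries at least seven curves parallel to the two most-used axes. One input is cheap: condition (b) makes $\mathcal{B}(S)$ a basis of the lattice $\Pic(S)$, which is nondegenerate of signature $(1,\rho(S)-1)$, and conditions (a) and (c) force the Gram matrix of $\mathcal{B}(S)$ to be $M_{k,l,m}$; hence $\det(M_{k,l,m})\ne 0$ is automatic, and Lemma \ref{PureDet} already excludes every type with $128-16(k+l+m)+klm=0$ (for instance all $(a,b,0)$ with $a+b=8$). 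So it remains to rule out the finitely many remaining dominators of the four base types.

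The first substantive step is a rigidity lemma for pure K3 surfaces: two distinct curves parallel to the same axis must have distinct coordinates in each of the other two directions. Indeed, if $C_{x,(a,b)}$ and $C_{x,(a,b')}$ lie on $S$ with $b\ne b'$, then both lie on the fiber $E_{y=a}=S\cap\{y=a\}$, an effective divisor of bi-degree $(2,2)$ in $\{y=a\}\cong\mathbb{P}^1\times\mathbb{P}^1$ containing the two disjoint $(0,1)$-curves $C_{x,(a,b)}$ and $C_{x,(a,b')}$; the residual divisor then has bi-degree $(2,0)$ and is a nonzero sum of $(1,0)$-curves, each of which is a curve $C_{z,(c,a)}$ parallel to the $z$-axis contained in $S$. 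But $(C_{z,(c,a)}\cdot C_{x,(a,b)})=1\ne 0$, contradicting condition (c). The remaining directions follow symmetrically; and condition (c) itself already forbids a curve parallel to the $x$-axis and a curve parallel to the $y$-axis from sharing a $z$-coordinate, and cyclically. Consequently the $k$ curves parallel to the $x$-axis have $k$ distinct $y$-coordinates and $k$ distinct $z$-coordinates, and similarly for the other axes.

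Next I would pass to the defining forms. A curve $C_{x,(p_1,p_2)}\subseteq S$ is precisely a common zero of the three bi-degree-$(2,2)$ forms $Q_{x,0},Q_{x,1},Q_{x,2}$, which have no common factor since $Q$ is irreducible; a B\'ezout count in $\mathbb{P}^1\times\mathbb{P}^1$ gives $k\le 8$, and symmetrically $l,m\le 8$. For the surviving base types the point is that $k\ge 7$ is extremely rigid: if some pair among $Q_{x,0},Q_{x,1},Q_{x,2}$ is coprime, that pair cuts out a length-$8$ complete intersection containing seven of our points, and Cayley--Bacharach forces the third form through the eighth point as well, so $k=8$ (and in fact $Q$ lies in the ideal generated by that pair of forms in the $(y,z)$-variables); a short enumeration of the non-coprime factorization patterns of $(Q_{x,0},Q_{x,1},Q_{x,2})$ shows each of them caps the number of such curves at $6$. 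Thus one is reduced to configurations with, up to a permutation of axes, eight curves parallel to one axis and correspondingly few parallel to the others. Here the rigidity lemma pins the geometry down: for each $z$-coordinate $b_i$ of a curve $C_{x,i}$ parallel to the $x$-axis, the fiber $E_{z=b_i}$ contains no other curve parallel to an axis, hence decomposes as $C_{x,i}$ plus an irreducible bi-degree-$(2,1)$ curve of class $E_z-[C_{x,i}]$ — an effective $(-2)$-class — and symmetrically with $E_y$. The arrangement of $E_x,E_y,E_z$, the curves $C_{x,i}$, and these forced auxiliary $(-2)$-curves is exactly the maximal arrangement carried by $\tilde S$, which has eight curves parallel to each axis; the relations among the classes of the curves parallel to axes in $\tilde S$ — for example the displayed expression for $[C_{24}]$ in terms of $E_x,E_y,E_z$ and six of the other $x$-parallel classes — then reproduce a linear dependence among elements of $\mathcal{B}(S)$, or else a pair of axis-parallel curves meeting nontrivially, contradicting condition (b) or (c).

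The main obstacle I anticipate is precisely this last step: organizing the finitely many dominators not already killed by $\det(M_{k,l,m})$, together with the finitely many factorization patterns of the triples $(Q_{x,j})$, $(Q_{y,j})$, $(Q_{z,j})$, into a clean case analysis, and verifying in each case that the forced auxiliary $(-2)$-curves reproduce enough of the configuration on $\tilde S$ to generate a relation incompatible with $\mathcal{B}(S)$ being a basis. Everything upstream — the rigidity lemma, the B\'ezout and Cayley--Bacharach bounds, and the determinant obstruction — is routine once the setup is in place; the bookkeeping that matches the residual curves against the curves of $\tilde S$ is where the real work lies.
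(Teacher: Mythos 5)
Your approach is genuinely different from the paper's, but as it stands it has a real gap that leaves most of the proposition unproved. The B\'ezout/Cayley--Bacharach machinery you set up only constrains the number of curves parallel to a \emph{single} axis (via the three forms $Q_{x,0},Q_{x,1},Q_{x,2}$), so at best it bites when some axis already has at least $7$ parallel curves. But of the four minimal excluded types, three --- $(6,1,0)$, $(5,2,0)$, $(4,3,0)$ --- have no axis with more than $6$ parallel curves, and none of the four is killed by the determinant criterion either ($128-16\cdot 7+0=16\neq 0$ in each case). Your sentence ``Thus one is reduced to configurations with eight curves parallel to one axis'' therefore does not follow; the mixed types, which are the heart of the statement, are untouched. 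Even on the $(7,0,0)$ branch there are loose ends: the Cayley--Bacharach step needs the seven reduced points to form a colength-one subscheme of the length-$8$ complete intersection (which fails if the intersection is non-reduced at one of them), the eighth point could coincide with one of the seven, and the claim that every non-coprime factorization pattern caps the count at $6$ is asserted rather than proved. Finally, the last step --- that the forced configuration ``is exactly'' that of $\tilde S$, so that the relations such as the one for $[C_{24}]$ transfer to $S$ --- is not an argument: those relations are computed in $\Pic(\tilde S)$ for that particular surface, and nothing you have said identifies the intersection pattern on an arbitrary $S$ with it.

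For contrast, the paper's proof avoids all of this geometry of the defining forms. It observes that each of $L_{7,0,0}$, $L_{6,1,0}$, $L_{5,2,0}$, $L_{4,3,0}$ (realized concretely inside $\Pic(\tilde S)$) contains a class $\gamma_0$ with $(\gamma_0\cdot\gamma_0)=-2$, $(\gamma_0\cdot E_{\omega'})=1$ for exactly one $E_{\omega'}$, $\gamma_0\perp$ the other two $E_\omega$, and $(\gamma_0\cdot[C_j])\geq 0$ for the basis classes. If $S$ were pure of type $(k,l,m)\notin\mathcal{N}$, such a $\gamma_0\notin\mathcal{B}(S)$ would exist in $\Pic(S)$; being effective and orthogonal to two of the $E_\omega$, it would decompose into classes of curves parallel to axes, and its intersection numbers make that impossible. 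If you want to salvage your route, you would need a genuinely new mechanism for the mixed types --- the single-axis B\'ezout count cannot see the interaction between curves parallel to different axes, which is exactly what the lattice argument exploits.
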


\begin{proof}
Since \(L_{7,0,0}\) is isometric to
\[\langle E_x,E_y,E_z,[C_7],[C_8],[C_{11}],[C_{12}],[C_{19}],[C_{20}],[C_{23}] \rangle,\]
which contains \([C_{24}]\); \(L_{6,1,0}\) is isometric to
\[\langle E_x,E_y,E_z,[C_7],[C_8],[C_{11}],[C_{12}],[C_{19}],[C_{20}],[C_{21}] \rangle,\]
which contains \([C_{22}]\); \(L_{5,2,0}\) is isometric to
\[\langle E_x,E_y,E_z,[C_7],[C_8],[C_{11}],[C_{19}],[C_{20}],[C_{21}],[C_{22}] \rangle,\]
which contains \([C_{12}]\); and \(L_{4,3,0}\) is isometric to
\[\langle E_x,E_y,E_z,[C_7],[C_8],[C_9],[C_{10}],[C_{19}],[C_{20}],[C_{21}] \rangle,\]
which contains \([C_{22}]\); each of these lattices contains an element \(\gamma_0\) which satisfies \((\gamma_0 \cdot \gamma_0)=-2\), \((\gamma_0 \cdot E_{\omega'})=1\) for some \(E_{\omega'}\), \((\gamma_0 \cdot E_{\omega \neq \omega'})=0\), and \((\gamma_0 \cdot [C_j]) \geq 0\) for all \([C_j]\) in the given basis.

Suppose \(S \subseteq \mathbb{P}^1 \times \mathbb{P}^1 \times \mathbb{P}^1\) is pure of type \((k,l,m)\); so, in light of the natural embedding of one the lattices listed above in \(L_{k,l,m}\), there must be \(\gamma_0 \in \Pic(S)\) with the properties described above and, moreover, the property that \(\gamma_0 \notin \mathcal{B}(S)\). Since \(\gamma_0\) is effective and is in \(\langle E_{\omega_1},E_{\omega_2} \rangle^\perp\) for some distinct \(E_{\omega_1}\) and \(E_{\omega_2}\), it is a sum \(\gamma_0 = [D_1]+\dots+[D_n]\) of (a priori not necessarily distinct) classes of prime divisors all satisfying \((D_j \cdot D_j)=-2\) and \(D_j \in \langle E_{\omega_1},E_{\omega_2} \rangle^\perp\). Then, as in the proof of Proposition \ref{K3embed}, each \(D_j\) must be a curve parallel to an axis--which leads to a contradiction.
\end{proof}

\subsection{Proof of Theorem \ref{Existence}}\label{FINDpf}

Suppose \((k,l,m) \in \mathcal{N}-\{(3,3,3)\}\). By Proposition \ref{K3exist}, there is a primitive lattice embedding \(L_{k,l,m} \leq L_{K3}\). Since \(L_{k,l,m}^\perp\) has signature \((2,17-k-l-m)\), \(L_{k,l,m}^\perp \otimes \mathbb{R}\) contains a positive-definite two-dimensional subspace \(V\) such that \(V^\perp \cap L_{K3} = L_{k,l,m}\). Thus the surjectivity of the period map for K3 surfaces (e.g.,\cite{bhpv},\cite{Dol}) implies (with an application of the Leschetz theorem on (1,1) classes) that there is a K3 surfaces \(S\) with \(\Pic(S)\) isometric to \(L_{k,l,m}\). Moreover, the isometry between \(\Pic(S)\) and \(L_{k,l,m}\) can be taken to be effective for any allowable choice of \(\Gamma^+\). So, by Propositions \ref{NefEmbed} and \ref{K3embed}, there is a pure K3 surface of type \((k,l,m)\). In fact, since it is established that at least one exists, the moduli space \(\mathcal{M}(L_{k,l,m})\) of ample \(L_{k,l,m}\)-polarized K3 surfaces (with \(\Gamma^+\) fixed) (e.g.,\cite{Dol},\cite{bhpv}) is a quasi-projective variety of dimension \(17-k-l-m\). For every \(T \in \mathcal{M}(L_{k,l,m})\), there is an effective primitive lattice embedding \(L_{k,l,m} \leq \Pic(T)\); so either \(T\) is pure of type \((k,l,m)\) or \(\rho(T) > 3+k+l+m\) and \(T \in \mathcal{M}(\Pic(T))\). Since there are only countably many possible such \(\Pic(T)\) not effectively isometric to \(L_{k,l,m}\) and the dimension of \(\mathcal{M}(\Pic(T))\) for each of these is less than \(17-k-l-m\), the space \(\mathcal{M}_0(L_{k,l,m})\) of K3 surfaces \(S\) with \(\Pic(S)\) effectively isometric to \(L_{k,l,m}\) is very general in \(\mathcal{M}(L_{k,l,m})\). By Propositions \ref{NefEmbed} and \ref{K3embed}, \(\mathcal{M}_0(L_{k,l,m})\) is the space of isomorphism classes of K3 surfaces contained in \(\mathcal{U}_{k,l,m}\).

Let \(\mathcal{V}_{k,l,m} \subseteq \mathbb{P}^{26}\) denote the space of all effective divisors of tri-degree \((2,2,2)\) whose supports contain some union of curves
\[C_{x,1} \cup \dots \cup C_{x,k} \cup C_{y,1} \cup \dots \cup C_{y,l} \cup C_{z,1} \cup \dots \cup C_{z,m}\]
such that each \(C_{\omega,j}\) is a curve parallel to the \(\omega\)-axis and any two distinct \(C_{\omega,j}\) and \(C_{\omega',j'}\) are disjoint, and let \(\mathcal{I}_{k,l,m}\) denote the incidence variety in
\[\mathbb{P}^{26} \times (\mathbb{P}^1 \times \mathbb{P}^1)^{k+l+m} = \{(Q,[\alpha_{x,1}:\beta_{x,1}],[\delta_{x,1}:\epsilon_{x,1}],\dots,[\alpha_{z,m}:\beta_{z,m}],[\delta_{z,m}:\epsilon_{z,m}]\}\]
defined by
\[Q_{\omega,0}(\alpha_{\omega,j},\beta_{\omega,j},\delta_{\omega,j},\epsilon_{\omega,j}) = Q_{\omega,1}(\alpha_{\omega,j},\beta_{\omega,j},\delta_{\omega,j},\epsilon_{\omega,j}) = Q_{\omega,2}(\alpha_{\omega,j},\beta_{\omega,j},\delta_{\omega,j},\epsilon_{\omega,j}) = 0\]
for all \(\omega\) and \(j\). Since \(\mathcal{V}_{k,l,m}\) is the image under the projection to \(\mathbb{P}^{26}\) of a complement \(\mathcal{V}_{k,l,m}' \subseteq \mathcal{I}_{k,l,m}\) of finitely many sections from linear subspaces of \((\mathbb{P}^1 \times \mathbb{P}^1)^{k+l+m}\), it is a quasi-projective variety. For a fixed point \(\zeta \in (\mathbb{P}^1 \times \mathbb{P}^1)^{k+l+m}\), the equations defining \(\mathcal{I}_{k,l,m}\) show that the fiber over \(\zeta\) of the projection of \(\mathcal{I}_{k,l,m}\) to \((\mathbb{P}^1 \times \mathbb{P}^1)^{k+l+m}\) is a linear subspace of \(\mathbb{P}^{26}\) of codimension at most \(3(k+l+m) \leq 24\). Since the projection of \(\mathcal{V}_{k,l,m}'\) to \((\mathbb{P}^1 \times \mathbb{P}^1)^{k+l+m}\) is Zariski dense, it follows that \(\mathcal{V}_{k,l,m}\) is irreducible. By the construction of \(\mathcal{V}_{k,l,m}\), \(\mathcal{U}_{k,l,m}\) is very general in \(\mathcal{V}_{k,l,m}\). So the closure of \(\mathcal{U}_{k,l,m}\) contains \(\mathcal{V}_{k',l',m'}\) for all \((k',l',m') \in \mathcal{N}\) satisfying \((k,l,m) < (k',l',m')\).

The claim for \((k,l,m) \notin \mathcal{N}\) is given by Proposition \ref{LargePureExtraC}.
\hfill \qed

\section{Computing Entropies on Pure K3 Surfaces}\label{COMPUTE}

Fix \(S \in \mathcal{U}_{k,l,m}\) for some \((k,l,m) \in \mathcal{N}\). It is a well-known fact (e.g.,\cite{Bea}) that every birational self-map on \(S\) extends to an automorphism of \(S\). So, in particular, each \(\tau_\omega\)--and hence also \(f\)--defines an automorphism of \(S\).

\subsection{Cohomological actions of involutions}

We will compute the action of \(\tau_x^*\) on \(\Pic(S)\); the actions of \(\tau_y^*\) and \(\tau_z^*\) are similar. Write \(\mathcal{B}_x(S) = \{C_{x,p_1},\dots,C_{x,p_k}\}\).

\begin{proposition}\label{fMatrices}
Each \([C_{x,p_j}]\) is fixed by \(\tau_x^*\), as are \(E_y\) and \(E_z\). For each \([C_{y,p}] \in \mathcal{B}(S)\),
\[\tau_x^*[C_{y,p}] = E_z - [C_{y,p}].\]
For each \([C_{z,p}] \in \mathcal{B}(S)\),
\[\tau_x^*[C_{z,p}] = E_y - [C_{z,p}].\]
Finally,
\[\tau_x^*E_x = -E_x + 2E_y + 2E_z - [C_{x,p_1}] - \dots -[C_{x,p_k}].\]
\end{proposition}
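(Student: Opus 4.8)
The plan is to work geometrically with the birational involution $\tau_x$ and its explicit formula $\tau_x(x,y,z) = ([x_1 Q_{x,0} : x_0 Q_{x,2}], y, z)$ on $S$, then translate the geometry into divisor-class identities. The first observation is that $\tau_x$ fixes the $y$- and $z$-coordinates, so it maps each fiber $E_{y=p}$ to $E_{y=p}$ and each fiber $E_{z=p}$ to $E_{z=p}$; since linearly equivalent fibers have equal classes, $\tau_x^* E_y = E_y$ and $\tau_x^* E_z = E_z$. Next, for a curve $C_{x,p} \subseteq S$ parallel to the $x$-axis, $C_{x,p} = \{y = p_1\} \cap \{z = p_2\} \cap S$ is cut out by conditions purely on $y$ and $z$, so $\tau_x$ maps $C_{x,p}$ to itself; hence $\tau_x^*[C_{x,p_j}] = [C_{x,p_j}]$. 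The remaining three identities are the substantive part.

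For the fiber class $E_x$: I would track the image under $\tau_x$ of a general fiber $E_{x=p}$, $p = [p_0 : p_1]$. A point of $E_{x=p}$ has the form $([p_0 : p_1], y, z)$ with $(y,z)$ on the bidegree-$(2,2)$ curve $\{p_0^2 Q_{x,0} + p_0 p_1 Q_{x,1} + p_1^2 Q_{x,2} = 0\}$ in $\mathbb{P}^1 \times \mathbb{P}^1$; applying $\tau_x$ sends it to $([p_1 Q_{x,0}(y,z) : p_0 Q_{x,2}(y,z)], y, z)$. So the image is the curve in $\mathbb{P}^1 \times \mathbb{P}^1 \times \mathbb{P}^1$ parametrized by $(y,z) \mapsto ([p_1 Q_{x,0} : p_0 Q_{x,2}], y, z)$ as $(y,z)$ ranges over that conic — which, as a divisor on $S$, is linearly equivalent to some combination $a E_x + b E_y + c E_z - \sum e_j [C_{x,p_j}]$. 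The coefficients $a,b,c$ can be pinned down by intersecting with $E_x, E_y, E_z$: using $\tau_x^* E_y = E_y$, $\tau_x^* E_z = E_z$ and the intersection matrix $M_{0,0,0}$, we get $(\tau_x^* E_x \cdot E_y) = (E_x \cdot E_y) = 2$ and likewise with $E_z$, forcing $a = -1$, $b = c = 2$ once one also uses $(\tau_x^* E_x \cdot \tau_x^* E_x) = (E_x \cdot E_x) = 0$; the fixed curves $C_{x,p_j}$ each appear with coefficient $1$ because $\tau_x$ fixes them pointwise and a general member of the linear system $|{-E_x + 2E_y + 2E_z}|$ passes through each once, equivalently by computing $(\tau_x^* E_x \cdot [C_{x,p_j}]) = (E_x \cdot [C_{x,p_j}]) = 1$ together with the constraint that the class be effective and $\tau_x$-invariant. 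Concretely, since $\tau_x^*$ is an isometry fixing $E_y, E_z$ and each $[C_{x,p_j}]$, and $\Pic(S)$ has the basis $\mathcal{B}(S)$, writing $\tau_x^* E_x$ in this basis and imposing the three intersection conditions above plus $(\tau_x^* E_x \cdot [C_{y,p}])$-type conditions determines it uniquely.

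For $\tau_x^*[C_{y,p}]$ with $C_{y,p} = \{x = p_2\} \cap \{z = p_1\} \cap S$: here $\tau_x$ moves the $x$-coordinate, so $C_{y,p}$ is not preserved; its image lies in the fiber $E_{z = p_1}$ (the $z$-coordinate is fixed), so $\tau_x^*[C_{y,p}] = \alpha E_z + (\text{other terms})$, and since $[C_{y,p}]$ is a $(-2)$-class meeting $E_y$ once, its image is a $(-2)$-class meeting $E_y$ once and contained in $E_{z=p_1}$; the only such effective class consistent with $\tau_x^* E_z = E_z$ is $E_z - [C_{y,p}]$ — which one confirms by checking $(E_z - [C_{y,p}] \cdot E_z - [C_{y,p}]) = 0 + 0 - 2(E_z \cdot [C_{y,p}]) + \dots$; note $(E_z \cdot [C_{y,p}]) = 0$ since $C_{y,p}$ is a component of a fiber over the $z$-axis, so actually one must be careful: $[C_{y,p}]$ sits inside $E_{z=p_1}$, forcing $E_{z=p_1} - C_{y,p}$ to be the "complementary" effective divisor, and $\tau_x$ swaps $C_{y,p}$ with this complement inside the fiber. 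That swapping is exactly the content: $\tau_x$ restricted to the (elliptic or degenerate) fiber $E_{z = p_1}$ is the involution induced by the bidegree-$(2,2)$ structure in the $x$-direction, which interchanges the two rulings/components appropriately. The identity $\tau_x^*[C_{z,p}] = E_y - [C_{z,p}]$ is symmetric, with the roles of $y$ and $z$ exchanged.

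The main obstacle will be making the "swapping inside a fiber" argument rigorous — i.e., justifying that $\tau_x^*$ sends $[C_{y,p}]$ to $E_z - [C_{y,p}]$ rather than to some other effective class in $E_{z=p_1}$, and similarly pinning down that the coefficient of each $[C_{x,p_j}]$ in $\tau_x^* E_x$ is exactly $1$. I expect the cleanest route is not point-chasing but linear algebra: $\tau_x^*$ is an involutive isometry of $\Pic(S)$ that fixes the subspace $\langle E_y, E_z, [C_{x,p_1}], \dots, [C_{x,p_k}]\rangle$ pointwise (established above), so it is determined by its action on $E_x$ and on the classes $[C_{y,p}], [C_{z,p}]$; the constraints that it be an isometry, be an involution, send effective classes of curves on $S$ to effective classes, and restrict correctly on fibers then leave exactly the stated formulas as the only possibility. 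Verifying these constraints cut down to a unique answer is a finite computation using $M_{k,l,m}$, and I would present it as such rather than through the birational formula, using the formula only to establish the qualitative facts (which coordinates are fixed, which curves are preserved, and that images of the named curves are effective).
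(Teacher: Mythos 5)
Your overall strategy coincides with the paper's: use the fact that \(\tau_x\) preserves the \(y\)- and \(z\)-coordinates to fix \(E_y\), \(E_z\), and each \([C_{x,p_j}]\); realize \(\tau_x^*[C_{y,p}]\) by observing that \(\tau_x\) preserves the fiber \(E_{z=p_1}\) and exchanges \(C_{y,p}\) with its residual divisor; and recover \(\tau_x^*E_x\) last from the isometry and involution conditions (the paper does exactly this, noting that the span of \(\mathcal{B}(S)\setminus\{E_x\}\) is \(\tau_x^*\)-invariant of corank one, so the remaining column of the matrix is forced). Your side remarks are also sound: \((E_z\cdot[C_{y,p}])=0\), the image class is a \((-2)\)-class meeting \(E_y\) once, and the coefficient checks on \(\tau_x^*E_x\) come out right.

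The one point you explicitly leave open --- why the ``swap inside the fiber'' lands on \(E_z-[C_{y,p}]\) and not on some other effective class supported in \(E_{z=p_1}\) --- is a genuine gap, and it is exactly the step the paper closes with Remark \ref{NoD2}: for a pure \(S\), the residual divisor \(E_{z=p_1}-C_{y,p}\), of bi-degree \((1,2)\), is a \emph{prime} divisor not parallel to any axis. This matters because if that residual divisor contained a bi-degree-\((0,1)\) component (a curve parallel to the \(x\)-axis) or a second bi-degree-\((1,0)\) component (another curve parallel to the \(y\)-axis), the deck involution of the degree-\(2\) projection \(E_{z=p_1}\to\mathbb{P}^1_y\) would carry \(C_{y,p}\) onto only the degree-one-in-\(x\) part of the residue, and the class of the image would be \(E_z-[C_{y,p}]-[C_{x,q}]\) or similar, breaking the stated formula. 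Purity is what excludes this: condition (c) of Definition \ref{PureType} forbids a curve parallel to the \(x\)-axis from lying in the same fiber as \(C_{y,p}\) (they would meet in a point), and a second curve parallel to the \(y\)-axis in that fiber would force a residual bi-degree-\((0,2)\) piece, which again decomposes into forbidden parallel curves. Your proof needs this (or the paper's Remark \ref{NoD2}, which packages it via Proposition \ref{K3embed}) spelled out; with it, the argument is complete and matches the paper's.
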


\begin{proof}
Since \(\tau_x = \tau_x^{-1}\) preserves every elliptic curve which is a fiber over either the \(y\)-axis or the \(z\)-axis, \(\tau_X^*\) must fix \(E_y\) and \(E_z\). For \(E_{\omega=\alpha}\) containing a curve \(C\) parallel to an axis, \(E_{\omega=\alpha} - C\) is an effective divisor of bi-degree \((1,2)\) or \((2,1)\). It follows from Remark \ref{NoD2} that in fact \(E_{\omega=\alpha} - C\) is a prime divisor not parallel to any axis. For each \(C_{x,p_j}\), write \(p_j=(\alpha,\delta)\); since \(\tau_x\) preserves both \(E_{y=\alpha}\) and \(E_{z=\delta}\), it must fix \(C_{x,p_j}\). For \([C_{y,p}] \in \mathcal{B}(S)\), write \(p = (\alpha,\delta)\); since \(\tau_x\) preserves \(E_{z=\alpha}\) and does not preserve \(C_{y,p}\), it must take \(C_{y,p}\) to \(E_{z=\alpha}-C_{y,p}\). It follows similarly that \(\tau_x^*\) takes \(C_{z,p}\) to \(E_{y=\delta}-C_{z,p}\) for \([C_{z,p}] \in \mathcal{B}(S)\).

With the action of \(\tau_x^*\) established for all elements of \(\mathcal{B}(S)\) except \(E_x\), the conditions that \(\tau_x\) is an involution and \(\tau_x^*\) preserves the intersection form given by \(M_{k,l,m}\) force the formula given for \(\tau_x^*E_x\) to hold.
\end{proof}

Proposition \ref{fMatrices} shows that the action of \(f^*\) in the basis \(\mathcal{B}(S)\) is constant on \(\mathcal{U}_{k,l,m}\), and gives the necessary information for computation of \(\lambda(f)\).

\begin{lemma}\label{fOrder}
If \((k',l',m')\) is a reordering of \((k,l,m)\), then \(\lambda(f)\) is constant on \(\mathcal{U}_{k,l,m} \cup \mathcal{U}_{k',l',m'}\).
\end{lemma}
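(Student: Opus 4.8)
The plan is to show that $\lambda(f)$ is unchanged under any permutation of the three $\mathbb{P}^1$-factors, by combining an elementary symmetry of products of involutions with the factor-permuting isomorphisms of $\mathbb{P}^1\times\mathbb{P}^1\times\mathbb{P}^1$. By Proposition \ref{fMatrices} the matrix of $f^*$ in the basis $\mathcal{B}(S)$ is determined by $(k,l,m)$, so $\lambda(f)$ is already constant on $\mathcal{U}_{k,l,m}$; write its value there as $\lambda(k,l,m)$. It then suffices to prove $\lambda(k,l,m)=\lambda(k',l',m')$ whenever $(k',l',m')$ is a reordering of $(k,l,m)$, after which constancy on the union is immediate.

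The key algebraic input is that for the self-inverse linear maps $a:=\tau_x^*$, $b:=\tau_y^*$, $c:=\tau_z^*$ on $\Pic(S)$, the six products $abc,bca,cab,cba,bac,acb$ all have the same spectral radius. Cyclic permutations of a product are conjugate — using $x^{-1}=x$ for an involution $x$ — which gives $abc\sim bca\sim cab$ and $cba\sim bac\sim acb$; moreover $(abc)^{-1}=cba$, and $f^*$ is an isometry of the nondegenerate intersection form on $\Pic(S)$, hence is conjugate to its inverse, so $abc\sim cba$. Since $f^*=\tau_x^*\tau_y^*\tau_z^*=abc$, this shows that for every ordering $(\omega_1,\omega_2,\omega_3)$ of $(x,y,z)$ the automorphism $\tau_{\omega_1}\circ\tau_{\omega_2}\circ\tau_{\omega_3}$ of $S$ has spectral radius exactly $\lambda(f)$.

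Next I would bring in the geometry. A permutation $\sigma$ of the coordinate copies of $\mathbb{P}^1$ induces an automorphism $\Psi_\sigma$ of $\mathbb{P}^1\times\mathbb{P}^1\times\mathbb{P}^1$; because tri-degree $(2,2,2)$ is symmetric, $\Psi_\sigma$ carries a pure K3 surface $S$ of type $(k,l,m)$ isomorphically onto a pure K3 surface $S'$ of the correspondingly reordered type, sending curves parallel to the $\omega$-axis to curves parallel to the $\sigma(\omega)$-axis. Reading off the definition of $\tau_x$ after relabeling the variables of $Q$, one sees that $\Psi_\sigma$ intertwines $\tau_\omega$ on $S$ with $\tau_{\sigma(\omega)}$ on $S'$, so it conjugates $f=\tau_z\circ\tau_y\circ\tau_x$ on $S$ to $\tau_{\sigma(z)}\circ\tau_{\sigma(y)}\circ\tau_{\sigma(x)}$ on $S'$. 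Hence $\lambda(f)$ on $S$ equals the spectral radius of this reordered composition on $S'$, which by the previous paragraph equals $\lambda(f)$ on $S'$; that is, $\lambda(k,l,m)=\lambda(k',l',m')$.

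The only delicate point is the bookkeeping in the last step — checking directly from the defining formula that $\Psi_\sigma$ conjugates $\tau_\omega$ to $\tau_{\sigma(\omega)}$, and that it sends pure surfaces of type $(k,l,m)$ to pure surfaces of the reordered type. This is routine once the definitions of $\tau_x$ and of ``pure of type $(k,l,m)$'' are unwound. (Alternatively one could avoid the geometry and simply verify that the characteristic polynomial of the matrix assembled from Proposition \ref{fMatrices} is symmetric in $k,l,m$, but the argument above is case-free and conceptually cleaner.)
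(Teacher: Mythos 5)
Your proof is correct and follows essentially the same route as the paper's: both arguments come down to the facts that the six orderings of $\tau_x,\tau_y,\tau_z$ are conjugate in $\langle\tau_x,\tau_y,\tau_z\rangle$ to either $f$ or $f^{-1}$ and that $\lambda(f)=\lambda(f^{-1})$. The only cosmetic differences are that you match the action on a type-$(k',l',m')$ surface with that on a type-$(k,l,m)$ surface via the factor-permuting automorphism of $\mathbb{P}^1\times\mathbb{P}^1\times\mathbb{P}^1$, whereas the paper simply notes that the matrices from Proposition~\ref{fMatrices} agree after relabeling, and that you justify $\lambda(f)=\lambda(f^{-1})$ explicitly via invariance of the intersection form, which the paper leaves implicit.
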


\begin{proof}
Fix \(S' \in \mathcal{U}_{k',l',m'}\). Some
\[g \in \mathcal{G} := \{\tau_z \circ \tau_y \circ \tau_x,\tau_z \circ \tau_x \circ \tau_y,\tau_y \circ \tau_x \circ \tau_z,\tau_y \circ \tau_z \circ \tau_x,\tau_x \circ \tau_z \circ \tau_y,\tau_x \circ \tau_y \circ \tau_z\}\]
has the property that the action of \(g^*\) on \(\Pic(S')\) is essentially identical to the action of \(f^*\) on \(\Pic(S)\). Since every element of \(\mathcal{G}\) is conjugate (by some element in \(\langle \tau_x,\tau_y,\tau_z \rangle\)) to either \(f\) or \(f^{-1}\), the spectral radius of \(f^*\) on \(\Pic(S')\) is the same as that of \(f^*\) on \(\Pic(S)\).
\end{proof}

\subsection{Proof of Theorem \ref{Entropies}}

By Proposition \ref{fMatrices} and Lemma \ref{fOrder}, the action of \(f^*\) on \(\Pic(S)\) depends only on the unordered type of \(S\). The following table gives the spectral radius (computed in Mathematica) of \(f^*\) for all types of \(S\), and one can check that \(\lambda(k,l,m)\) exhibits the claimed behavior.
\hfill \qed

\vspace{.2in}

\begin{center}
\begin{tabular}{c c c}
\hline
\phantom{min. poly. for \(\lambda(f)\) plus} & \phantom{min. poly. for \(\lambda(f)\) plus} & \phantom{min. poly. for \(\lambda(f)\) plus} \\
\((k,l,m)\) & \(\lambda(f)\) & min. poly. for \(\lambda(f)\) \\
\phantom{min. poly. for \(\lambda(f)\) plus} & \phantom{min. poly. for \(\lambda(f)\) plus} & \phantom{min. poly. for \(\lambda(f)\) plus} \\
\hline
\((0,0,0)\) & 17.944... & \(t^2 - 18t + 1\)  \\
\hline
\((1,0,0)\) & 15.937... & \(t^2 - 16t + 1\) \\
\hline
\((2,0,0)\) & 13.928... & \(t^2 - 14t + 1\) \\
\hline
\((3,0,0)\) & 11.916... & \(t^2 - 12t + 1\) \\
\hline
\((4,0,0)\) & 9.898... & \(t^2 - 10t + 1\) \\
\hline
\((5,0,0)\) & 7.872... & \(t^2 - 8t + 1\) \\
\hline
\((6,0,0)\) & 5.828... & \(t^2 - 6t + 1\) \\
\hline
\((1,1,0)\) & 14.011... & \(t^4 - 16t^3 + 29t^2 - 16t + 1\) \\
\hline
\((2,1,0)\) & 12.113... & \(t^4 - 14t^3 + 24t^2 - 14t + 1\) \\
\hline
\((3,1,0)\) & 10.261... & \(t^4 - 12t^3 + 19t - 12t + 1\) \\
\hline
\((4,1,0)\) & 8.487... & \(t^4 - 10t^3 + 14t - 10t + 1\) \\
\hline
\((5,1,0)\) & 6.854... & \(t^2 - 7t + 1\) \\
\hline
\((2,2,0)\) & 10.375... & \(t^4 - 12t^3 + 18t^2 - 12t + 1\) \\
\hline
\((3,2,0)\) & 8.758... & \(t^4 - 10t^3 +12t^2 - 10t^3 + 1\) \\
\hline
\((4,2,0)\) & 7.327... & \(t^4 - 8t^3 + 6t^2 - 8t + 1\) \\
\hline
\((3,3,0)\) & 7.471... & \(t^4 - 8t^3 + 5t^2 - 8t + 1\) \\
\hline
\((1,1,1)\) & 12.113... & \(t^4 - 14t^3 + 24t^2 - 14t + 1\) \\
\hline
\((2,1,1)\) & 10.261... & \(t^4 - 12t^3 + 19t - 12t + 1\) \\
\hline
\((3,1,1)\) & 8.487... & \(t^4 - 10t^3 + 14t - 10t + 1\) \\
\hline
\((4,1,1)\) & 6.854... & \(t^2 - 7t +1\) \\
\hline
\((5,1,1)\) & 5.462... & \(t^4 - 6t^3 + 4t^2 - 6t + 1\) \\
\hline
\((2,2,1)\) & 8.487... & \(t^4 - 10t^3 + 14t - 10t + 1\) \\
\hline
\((3,2,1)\) & 6.854... & \(t^2 - 7t + 1\) \\
\hline
\((4,2,1)\) & 5.462... & \(t^4 - 6t^3 + 4t^2 - 6t + 1\) \\
\hline
\((3,3,1)\) & 5.462... & \(t^4 - 6t^3 + 4t^2 - 6t + 1\) \\
\hline
\((2,2,2)\) & 6.678... & \(t^4 - 8t^3 + 10t^2 - 8t + 1\) \\
\hline
\((3,2,2)\) & 5.037... & \(t^4 - 6t^3 + 6t^2 - 6t + 1\) \\
\hline
\((4,2,2)\) & 3.732... & \(t^2 - 4t + 1\) \\
\hline
\end{tabular}
\clearpage
\begin{tabular}{c c c }
\phantom{min. poly. for \(\lambda(f)\) plus} & \phantom{min. poly. for \(\lambda(f)\) plus} & \phantom{min. poly. for \(\lambda(f)\) plus} \\
\hline
\((3,3,2)\) & 3.441... & \(t^4 - 4t^3 + 3t^2 - 4t +1\) \\
\hline
\((3,3,3)\) & 1 & \(t-1\) \\
\hline\hline
\end{tabular}
\end{center}

\bibliographystyle{abbrv}
\bibliography{refs-2015.06.20}

\end{document}